\definecolor{LinkColor}{rgb}{0,0,0} 
\newtheorem{theorem}{Theorem}
\newtheorem{lemma}[theorem]{Lemma}
\newtheorem{proposition}[theorem]{Proposition}
\theoremstyle{definition}
\theoremstyle{remark}
\newtheorem{remark}[theorem]{Remark}
\newtheorem{example}[theorem]{Example}
\newcommand{\Sz}{\operatorname{Sz}}
\newcommand{\PSL}{\operatorname{PSL}}
\newcommand{\SL}{\operatorname{SL}}
\newcommand{\GL}{\operatorname{GL}}
\newcommand{\Z}{\mathbb{Z}}
\newcommand{\V}{\textup{V}}
\newcommand{\ZZ}{\mathbb{Z}}
\newcommand{\QQ}{\mathbb{Q}}
\newcommand{\C}{\mathbb{C}}
\newcommand{\lra}{\longrightarrow}
\newcommand{\sdp}{\rtimes}
\newcommand{\Q}{\mathbb Q}
\newcommand{\Ker}{\operatorname{Ker}}
\newcommand{\boxedcomp}[1]{\begin{tabular}{|c|} \hline \ensuremath{#1} \\ \hline \end{tabular}}
\newcommand{\boxxedcomp}[2]{\begin{tabular}{|c|} \hline \ensuremath{#1} \\ \hline \ensuremath{#2} \\ \hline \end{tabular}}
\begin{document}

\title[Algorithmic aspects of units in group rings]{Algorithmic aspects of units in group rings}
\author{Andreas B\"achle}
\address{Vakgroep Wiskunde, Vrije Universiteit Brussel, Pleinlaan 2, 1050 Brussels, Belgium}
\email{\href{mailto:abachle@vub.ac.be}{abachle@vub.ac.be}}
\author{Wolfgang Kimmerle}
\address{ Fachbereich Mathematik, IGT,  Universit\"{a}t Stuttgart, Pfaffenwaldring 57, 70550 Stuttgart, Germany}
\email{\href{mailto:kimmerle@mathematik.uni-stuttgart.de}{kimmerle@mathematik.uni-stuttgart.de}}
\author{Leo Margolis}
\address{Departamento de matem\'aticas, Facultad de matem\'aticas, Universidad de Murcia, 30100 Murcia, Spain}
\email{\href{mailto:leo.margolis@um.es}{leo.margolis@um.es}}
\thanks{The first author is a postdoctoral researcher of the FWO (Research Foundation Flanders). The third is supported by a Marie-Curie Individual Fellowship from EU project 705112-ZC}
\subjclass[2010] {16S34, 16U60, 20C05} 
\keywords{integral group ring, torsion units, Zassenhaus Conjecture, computer algebra, Amitsur groups, Frobenius complements}

\begin{abstract}
We describe the main questions connected to torsion subgroups in the unit group of integral group rings of finite groups and algorithmic methods to attack these questions. We then prove the Zassenhaus Conjecture for Amitsur groups and prove that any normalized torsion subgroup in the unit group of an integral group of a Frobenius complement is isomorphic to a subgroup of the group base. Moreover we study the orders of torsion units in integral group rings of finite almost quasisimple groups and the existence of torsion-free normal subgroups of finite index in the unit group.
\end{abstract}

\maketitle

\section{Introduction}
The study of the units of an integral group ring $\mathbb{Z}G$ for a finite group $G$ has begun in G. Higman's thesis \cite{HigmanThesis}. 
G.Higman classified the finite groups $G$ whose integral group ring has only trivial units. The aim of this article is to present recent work on the structure of torsion subgroups on the unit group of $\Z G$ which has been achieved especially with the aid of algorithmic tools. 
\vskip1em  

Denote by
\[\varepsilon: \mathbb{Z}G \rightarrow \mathbb{Z}, \ \sum_{g \in G} z_g g \mapsto \sum_{g \in G} z_g \]
the augmentation map. Being a ring homomorphism $\varepsilon$ maps units of $\mathbb{Z}G$ to units of $\mathbb{Z}$, so up to multiplication with $-1$ any unit in $\mathbb{Z}G$ has augmentation $1$ and it suffices to study the units of augmentation $1$ in $\mathbb{Z}G$. These so called normalized units units will be denoted by $\mathrm{V}(\mathbb{Z}G)$.

Though extensively studied, very few general theorems on the behaviour of finite subgroups in $\mathrm{V}(\mathbb{Z}G)$ are available. It is known that the order of a finite subgroup of $\mathrm{V}(\mathbb{Z}G)$ divides the order of $G$ \cite{ZK} and that the order of a torsion unit in $\mathrm{V}(\mathbb{Z}G)$ divides the exponent  of $G$ \cite{CohnLivingstone}. But it is not even known, if the orders of torsion units in $\mathrm{V}(\mathbb{Z}G)$ coincide with the orders of elements in $G$. For a long time the Isomorphism Problem, which asks if a ring isomorphism $\mathbb{Z}G \cong \mathbb{Z}H$ implies a group isomorphism $G \cong H$, was the focus of attention in the area, see e.g. \cite{RoggenkampScottIso}. A negative answer to this problem was finally given by M.Hertweck \cite{HertweckIso}. 

The main questions in the area of torsion units of integral group rings are given and inspired by three conjectures of H.~Zassenhaus. A subgroup $U \leq \mathrm{V}(\mathbb{Z}G)$ is called rationally conjugate to a subgroup of $G$, if there exist a subgroup $U'\leq G$ and a unit $x \in \mathbb{Q}G$ such that $x^{-1}Ux = U'$. Let $G$ be  finite group 
\
\vskip1em
\textbf{(ZC1)}  Units of finite order of $\mathrm{V}(\Z G)$ are rationally conjugate to 
elements of $G .$ 
\vskip1em

\textbf{(ZC2)}  Group bases, i.e. subgroups of $\mathrm{V}(\Z G)$ of the same order as $G$, are rationally conjugate.
\vskip1em

\textbf{(ZC3)}  A finite subgroup $H$ of $\mathrm{V}(\Z G)$ is rationally conjugate to a subgroup of $G .$ 
\vskip1em

Note that (ZC1) is still open. For (ZC2) and (ZC3) counterexamples are known (not only by the counterexample to the Isomorphism Problem). However  
they hold for important classes of finite groups, cf. Section 4.
\vskip1em 

In the last ten years computational tools have been developed to attack these 
questions. The fundamental tool is the HeLP - method which is now available as
a \texttt{GAP} - package \cite{HeLPPaper}. This package makes use of two external solvers for integral linear inequalities, namely 4ti2 \cite{4ti2} and normaliz \cite{Normaliz} which substantially improved the efficiency of \texttt{HeLP}. 
In Section 2 we describe the HeLP method as well as other algorithmic methods which have been developed in the last years to handle cases left open 
by \texttt{HeLP}.  
The remaining article is organized as follows. In Section 3 we prove (ZC1) for Amitsur groups. In the next section we survey recent results circulating around the Zassenhaus conjectures and isomorphism questions. We exhibit especially those results which have been established with the help of computational tools. We present as well the major research problems on torsion units of integral group rings.  
In Section 5 we show that finite subgroups of $\mathrm{V}(\Z G)$ are isomorphic to a subgroup of $G$ if $G$ is a Frobenius complement.     
In the last two sections further classes of finite groups (in particular almost simple groups) are investigated especially with ordinary character theory.
For the question whether the projection of $\mathrm{V}(\Z G)$ onto a faithful Wedderburn component (of $\Q G$ or $\C G$)  has a torsion free kernel, it becomes transparent that the use of  generic characters and generic character tables is extremely useful. This underlines the connection to other topics of computational representation theory of finite groups which are in the focus of recent research. It also shows that this is related with the construction of big torsion free normal subgroups of $\mathrm{V}(\Z G).$ This connects the investigation of torsion units of integral group rings with the 
other main topic in the area, the desription of the whole unit group of $\Z G$ in terms of generators and relations.  
This involves questions on the generation of units of infinite order, free non-abelian subgroups of $\mathrm{V}(\mathbb{Z}G)$, generators of subgroups which have finite index in $\mathrm{V}(\mathbb{Z}G)$ and others. For a recent detailed monograph on these latter topics see \cite{JespersDelRio1, JespersDelRio2}.

\section{Tools and Available algorithms}
An important tool to study the questions mentioned above are so-called partial augmentations. For an element $u = \sum\limits_{g \in G} u_g g \in \mathbb{Z}G$ and a conjugacy class $x^G$ in $G$ the integer
\[\varepsilon_x(u) = \sum_{g \in x^G} u_g \]
is called the partial augmentation of $u$ at $x$. Being class functions of $G$, partial augmentations are a natural object to study using representation theory. The connection between the questions mentioned in the introduction and partial augmentations is established by the following result. 

\begin{proposition}[Marciniak, Ritter, Sehgal, Weiss {\cite[Theorem 2.5]{MRSW}}]\label{MRSW_Prop}
A torsion unit $u \in \V(\ZZ G)$ is rationally conjugate to a group element if and only if $\varepsilon_x(u^d) \geq 0$ for all divisors $d$ of $o(u)$ and all $x \in G$. 
\end{proposition}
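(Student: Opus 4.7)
The forward direction is essentially formal. Each partial augmentation is the $\QQ$-linear extension to $\QQ G$ of the indicator function of a conjugacy class of $G$; since class functions on $G$ satisfy $\phi(gh) = \phi(hg)$, bilinearity promotes this to $\varepsilon_x(\alpha\beta) = \varepsilon_x(\beta\alpha)$ for all $\alpha, \beta \in \QQ G$, so $\varepsilon_x$ is invariant under conjugation by units of $\QQ G$. Hence if $u$ is rationally conjugate to $g \in G$, then $u^d$ is rationally conjugate to $g^d$ for every $d \mid o(u)$, giving $\varepsilon_x(u^d) = \varepsilon_x(g^d) \in \{0,1\}$, which is nonnegative.

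For the converse, I would begin with the decisive elementary observation that partial augmentations are integers. Since $\varepsilon(u^d) = 1$ and the partial augmentations of any element sum to its augmentation, the values $\varepsilon_x(u^d)$ are nonnegative integers summing to $1$. Thus, for each divisor $d$ of $n := o(u)$, there is a unique conjugacy class represented by some $x_d \in G$ with $\varepsilon_{x_d}(u^d) = 1$ and all other partial augmentations of $u^d$ vanishing. Writing $g := x_1$, one computes for every irreducible character $\chi$ of $G$ that $\chi(u^d) = \sum_y \varepsilon_y(u^d)\chi(y) = \chi(x_d)$. Combining this with the Berman--Higman theorem (which gives $\varepsilon_1(u^d) = 0$ whenever $u^d \neq 1$) and standard order-detection via the regular character, one concludes that $o(g) = n$ and $x_d$ is $G$-conjugate to $g^d$ for every $d \mid n$.

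The principal remaining task is to upgrade this matching of partial augmentations of all powers of $u$ with those of $g$ into an actual rational conjugacy $u \sim_{\QQ G} g$. This is the deep step of the proposition. The standard route proceeds locally: for each prime $p \mid n$ one uses (a consequence of) Weiss's theorem on conjugacy of torsion units in integral $p$-adic group rings of $p$-groups to conjugate $u$ onto $g$ inside $\widehat{\ZZ}_p G$ after restricting to the Sylow subgroups of $\langle u \rangle$, and then patches these local conjugators via the Wedderburn decomposition of $\QQ G$ to produce a global conjugator in $(\QQ G)^\times$.

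The main obstacle is exactly this last synthesis. Integrality and non-negativity pin down the partial augmentations of all $u^d$ almost for free, but extracting from this combinatorial data an honest conjugating element of $(\QQ G)^\times$ is where the substantial structural input on torsion units in integral group rings is needed, and is the reason this proposition is so central to the subject.
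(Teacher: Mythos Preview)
First, note that the paper does not prove this proposition; it is quoted from \cite{MRSW}. So I assess your proposal against the argument in the original source.

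Your forward direction is correct, and your opening move in the converse is also right: nonnegativity together with $\sum_x \varepsilon_x(u^d)=1$ forces, for each $d\mid n$, a unique class $x_d^G$ with $\varepsilon_{x_d}(u^d)=1$, whence $\chi(u^d)=\chi(x_d)$ for every ordinary character $\chi$, and one checks that $x_d$ is $G$-conjugate to $g^d$ for $g:=x_1$.

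The gap is in what you call ``the deep step''. Upgrading the equalities $\chi(u^d)=\chi(g^d)$ (all $d$, all $\chi$) to a rational conjugacy $u\sim_{\QQ G} g$ requires neither Weiss's theorem nor any $p$-adic local--global patching; it is an elementary semisimple-algebra computation, and this is essentially how \cite{MRSW} proceed. Regard $\QQ G$ as a $(\QQ C_n,\QQ G)$-bimodule in two ways, with the generator $c\in C_n$ acting on the left by $u$, respectively by $g$. A unit $v\in(\QQ G)^\times$ satisfies $v^{-1}uv=g$ precisely when left multiplication by $v$ is an isomorphism between these bimodules. Since $\QQ[C_n\times G]$ is semisimple, the two bimodules are isomorphic as soon as their characters agree; and the character value at $(c^d,h)$ computes to $|C_G(h)|\,\varepsilon_h(u^d)$ (respectively $|C_G(h)|\,\varepsilon_h(g^d)$). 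Thus equality of all partial augmentations of all powers already \emph{is} rational conjugacy.

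So your outline is salvageable, but the step you flagged as the heart of the matter is in fact routine. Invoking Weiss is unnecessary, and the route you sketch (restrict to Sylow $p$-parts of $\langle u\rangle$ and patch) is not how this is, or needs to be, carried out. The content of the proposition lies entirely in the bimodule/character bookkeeping above.
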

Note that for $u \in \mathrm{V}(\mathbb{Z}G)$ the condition $\varepsilon_x(u) \geq 0$ for all $x \in G$ is equivalent to the fact that one partial augmentation of $u$ is $1$ while all other partial augmentations are $0$ -- a situation which clearly applies for an element $g \in G$.

Thus it is of major interest to find restrictions on the partial augmentations of torsion units. For the orders of elements providing possibly non-vanishing partial augmentations the following is known.

\begin{proposition}\label{prop:pA} Let $u \in \mathrm{V}(\mathbb{Z}G)$ be a torsion unit of order $n$.
\begin{itemize}
\item [i)] $\varepsilon_1(u) = 0$, unless $u = 1$. \emph{(}Berman-Higman Theorem \cite[Proposition 1.5.1]{JespersDelRio1}\emph{)}. 
\item[ii)] If $\varepsilon_x(u) \neq 0$ for some $x \in G$ then the order of $x$ divides $n$ \cite[Theorem 2.3]{HertweckBrauer}.
\end{itemize}
\end{proposition}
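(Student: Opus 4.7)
For part (i), my plan is a classical application of the left regular representation. Extend scalars to $\C$ and consider $\rho\colon \C G \to \mathrm{End}_\C(\C G)$. Since $u$ has finite order, $\rho(u)$ is a matrix of finite order acting on a space of dimension $|G|$, hence diagonalizable with eigenvalues that are roots of unity. Expressing $\rho(u)$ in the basis $\{g : g \in G\}$ yields $\mathrm{tr}(\rho(u)) = |G|\cdot \varepsilon_1(u)$, while the triangle inequality gives $|\mathrm{tr}(\rho(u))| \leq |G|$. Hence $\varepsilon_1(u) \in \ZZ$ satisfies $|\varepsilon_1(u)| \leq 1$. If $|\varepsilon_1(u)| = 1$, equality in the triangle inequality forces all eigenvalues of $\rho(u)$ to coincide, so $\rho(u) = \pm \mathrm{Id}$; faithfulness of $\rho$ together with $\varepsilon(u) = 1$ then forces $u = 1$.

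For part (ii), suppose for contradiction that $\varepsilon_x(u) \neq 0$ while $o(x) \nmid n$. Pick a prime $p$ with $v_p(o(x)) > v_p(n)$, write $n = p^a n'$ with $p \nmid n'$, and set $v = u^{n'}$ and $y = x^{n'}$; then $v \in \V(\ZZ G)$ has $p$-power order dividing $p^a$, while $o(y) > p^a$. My approach is twofold. First, establish a propagation identity relating $\varepsilon_x(u)$ to $\varepsilon_y(v)$ by summing over the conjugacy classes of $G$ whose $n'$-th powers land in $y^G$, arranging things so that the hypothesis $\varepsilon_x(u) \neq 0$ forces $\varepsilon_y(v) \neq 0$. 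Second, apply a modular analogue of the Berman--Higman argument to $v$ at the prime $p$: evaluating Brauer characters of $G$ in characteristic $p$ on the $p$-regular parts of powers of $v$ and comparing with trace identities in the $p$-adic group ring $\ZZ_p G$, one derives that partial augmentations of $v$ at elements of order exceeding $o(v)$ must vanish. The two conclusions contradict each other, which finishes (ii).

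The main obstacle is clearly in part (ii). The trace argument for (i) is essentially a single inequality, but (ii) mixes $p$-adic and modular representation theory and relies on a clean multiplicativity between partial augmentations of $u$ and those of its powers that remains valid on $p$-singular classes. The delicate step is the propagation lemma: controlling which contributions to $\varepsilon_y(v)$ come from which classes of $u$, and ruling out accidental cancellations. In Hertweck's argument in \cite{HertweckBrauer} this rests on properties of lifts of idempotents and block decompositions over $\ZZ_p G$, and that is where essentially all the technical work sits.
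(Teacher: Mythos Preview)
The paper does not prove this proposition; both parts are stated with citations to the literature and no argument is given, so there is no paper-proof to compare your attempt against.

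Your argument for (i) is the classical Berman--Higman proof via the regular representation and is correct as written.

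For (ii) there is a genuine gap precisely at the step you yourself flag as delicate. There is no propagation identity of the kind you need: from $\varepsilon_x(u)\neq 0$ one cannot in general conclude that $\varepsilon_{y}(u^{n'})\neq 0$ for some $y$ whose $p$-part has order exceeding $p^a$. The partial augmentations of a power $u^k$ are sums over all length-$k$ factorisations in $G$, and cancellation is the rule rather than the exception; the only general tool relating $\varepsilon_{\bullet}(u)$ to $\varepsilon_{\bullet}(u^k)$ is the congruence modulo $p$ for $p$-power exponents $k$ (the lemma stated immediately after this proposition), which is both the wrong exponent (your $n'$ is prime to $p$) and too weak (a congruence, not an equality) for your purpose. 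Nor does the reduction buy anything: for a unit $v$ of $p$-power order the assertion that $\varepsilon_y(v)=0$ whenever $o(y)>o(v)$ is essentially as hard as the general statement, because $p$-Brauer characters are blind on $p$-singular classes and ordinary characters alone do not separate $p$-elements of different orders. What you have written for (ii) is therefore not a proof but a restatement of the difficulty, with a pointer back to \cite{HertweckBrauer} for the actual work; Hertweck's argument there works directly with $u$ and its Brauer-character values rather than through a reduction to units of prime-power order.
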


From the properties of the $p$-power map in group algebras of characteristic $p$ one can obtain more restrictions on the partial augmentations given in terms of congruences modulo $p$.

\begin{lemma}[{cf. \cite[Proposition 3.1]{HeLPPaper} for a proof}]
Let $s$ be some element in $G$ and $u \in \V(\ZZ G)$ a unit of order $p^j \cdot m$ with $p$ a prime and $m \not= 1$. Then 
\[\smashoperator[r]{\sum\limits_{x^G,\ x^{p^j} \sim s}} \varepsilon_x(u) \equiv \varepsilon_{s}(u^{p^j}) \mod p.\]
\end{lemma}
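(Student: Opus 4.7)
The plan is to reduce modulo $p$ and exploit the near-additivity of the Frobenius in $\F_p G$ by expanding $u^{p^j}$ symbolically and grouping monomials under cyclic rotation. Writing $\bar u = \sum_{g \in G} \bar u_g g \in \F_p G$ for the reduction of $u$ modulo $p$, the point of departure is the expansion
\[
\bar u^{p^j} \;=\; \sum_{(g_1,\ldots,g_{p^j}) \in G^{p^j}} \bar u_{g_1}\cdots \bar u_{g_{p^j}} \, g_1 g_2 \cdots g_{p^j},
\]
whose summands I would organise into orbits under the cyclic group $C_{p^j}$ acting on $G^{p^j}$ by shifting coordinates.

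In parallel, let $T \subseteq \F_p G$ be the additive subgroup generated by all commutators $ab - ba$. A short verification identifies $T$ with the subspace of elements whose partial augmentations all vanish: on the one hand $gh$ and $hg$ are always $G$-conjugate, so any commutator has trivial partial augmentations; conversely, any element with vanishing partial augmentations can be written class by class as a sum of differences $g - xgx^{-1} = [x^{-1}, xg]$, each lying in $T$. Consequently $\varepsilon_s$ descends to a well-defined $\F_p$-linear functional on $\F_p G / T$.

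On a given $C_{p^j}$-orbit $\mathcal O$ the scalar product $\bar u_{g_1}\cdots \bar u_{g_{p^j}}$ is constant, while any two monomials $g_1 g_2 \cdots g_{p^j}$ and $g_2 \cdots g_{p^j} g_1$ are $G$-conjugate (by $g_1$) and so congruent mod $T$. Hence the contribution of $\mathcal O$ to $\bar u^{p^j}$ is $|\mathcal O|$ times a single summand modulo $T$; since $|\mathcal O|$ divides $p^j$, every non-singleton orbit has size divisible by $p$ and contributes $0$ modulo $T$ in $\F_p G$, while the constant sequences $(g,g,\ldots,g)$ each yield $\bar u_g^{p^j} g^{p^j} = \bar u_g g^{p^j}$ by Fermat. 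Combining,
\[
\bar u^{p^j} \;\equiv\; \sum_{g \in G} \bar u_g \, g^{p^j} \pmod{T},
\]
and applying $\varepsilon_s$ collapses the right-hand side to $\sum_{x^G :\, x^{p^j} \sim s} \varepsilon_x(u) \bmod p$, which is the claim. The only real technical point is the orbit argument combined with the identification of $T$ as the common kernel of the partial augmentations; both are standard, and interestingly the hypothesis $m \neq 1$ plays no role here.
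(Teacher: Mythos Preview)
Your argument is correct and is precisely the standard proof alluded to in the paper (which gives no proof of its own, only the reference and the remark that the congruence comes ``from the properties of the $p$-power map in group algebras of characteristic $p$''). The orbit decomposition under the cyclic shift on $G^{p^j}$, together with the identification of the additive commutator space $T$ with the common kernel of the partial augmentations, is exactly the mechanism behind the cited result, and your observation that the hypothesis $m \neq 1$ is irrelevant is also correct.
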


In some special situations there are more theoretical restrictions on the partial augmentations of torsion units. We will only mention one of them which has not been used frequently yet, but turns out to be quite useful for our results.

\begin{proposition}[{\cite[Proposition 2]{Hertweck_OTU}}]\label{p-part} Suppose that $G$ has a normal $p$-subgroup $N$, and that $u$ is a torsion
unit in $\V(\ZZ G)$ whose image under the natural map $\ZZ G \to \ZZ G/N$ has strictly smaller
order than $u$. Then $\varepsilon_g(u) = 0$ for every element $g$ of $G$ whose $p$-part has strictly
smaller order than the $p$-part of $u$. 
\end{proposition}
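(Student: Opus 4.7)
The plan has three stages: first reduce to the case where only the $p$-part of $u$'s order drops upon passing to the quotient; then apply Hertweck--Brauer (Proposition~\ref{prop:pA}(ii)) in $\V(\Z(G/N))$; then extract individual partial-augmentation vanishings, by induction on $a$, using the modular congruence preceding the proposition.

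Write $o(u) = p^a m$ with $\gcd(p,m)=1$ and let $\pi:\Z G\to\Z(G/N)$ denote the natural map, $\bar u := \pi(u)$. My first task is to show that $o(\bar u) = p^b m$ for some $b<a$. Put $v := u^{o(\bar u)}$, so that $v \in 1 + \omega(N)\Z G$, where $\omega(N)$ denotes the augmentation ideal of $\Z N$. Since $N$ is a normal $p$-subgroup, $\omega(N)\F_p G$ is a nilpotent ideal of $\F_p G$, hence the image of $v$ in $(\F_p G)^{\times}$ has $p$-power order. The eigenvalues of $v$ acting on the regular complex representation are therefore roots of unity that reduce to $1$ modulo a prime above $p$, and such roots of unity are precisely $p$-power roots of unity; thus $v$ itself has $p$-power order. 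Since $o(v) = o(u)/o(\bar u) = p^a m/o(\bar u)$ must be a $p$-power, we get $o(\bar u) = p^b m$ with $b<a$.

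Proposition~\ref{prop:pA}(ii) applied to $\bar u$ then yields $\varepsilon_{\bar y}(\bar u) = 0$ for every $\bar y \in G/N$ whose $p$-part has order exceeding $p^b$. Via the fibre-sum identity
\[\varepsilon_{\bar y}(\bar u) \;=\; \sum_{x^G:\ \bar x \in \bar y^{G/N}} \varepsilon_x(u),\]
this gives the vanishing of the corresponding integer sums of partial augmentations of $u$ along fibres of $\pi$.

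The main obstacle is the final stage: promoting these fibre-sum vanishings to the individual identities $\varepsilon_g(u) = 0$ for every $g \in G$ with $o(g_p) < p^a$. I would induct on $a$. The case $a = 0$ is vacuous; the case $a = 1$, the genuine base case, is handled directly by exploiting that $u^m$ is a unit of order $p$ in $1 + \omega(N)\Z G$ and analyzing its partial augmentations via the fibre structure of $\pi$ together with Berman--Higman in the appropriate quotient. For $a \geq 2$, apply the inductive hypothesis to $u^p$, whose $p$-part has order $p^{a-1}$ and whose image in the quotient has strictly smaller $p$-part (since $\max(0,b-1) < a-1$), to obtain $\varepsilon_s(u^p) = 0$ for $s$ with $o(s_p) < p^{a-1}$. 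Feeding this into the modular congruence of the preceding lemma, combining with the integer vanishings above, with Proposition~\ref{prop:pA}(ii) applied directly to $u$ (which restricts nonvanishing partial augmentations to elements of order dividing $p^a m$), and with the fact that $\pi$ preserves $p'$-parts of element orders (no nontrivial $p'$-element lies in the $p$-group $N$), one disentangles the fibre-sums class by class to isolate each $\varepsilon_g(u)$ and verify that it must vanish when $o(g_p) < p^a$.
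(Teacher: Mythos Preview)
The paper does not supply its own proof of this proposition; it is quoted with attribution from \cite[Proposition~2]{Hertweck_OTU} and immediately followed by Remark~\ref{rem:quotients}. So there is no in-paper argument to compare against, and your attempt has to stand on its own merits.

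Your Stage~1 is correct in conclusion, though the eigenvalue justification is roundabout. A cleaner route: if $w$ is the $p'$-part of $v=u^{o(\bar u)}$, then $\bar w=1$, so every class with $\varepsilon_x(w)\neq 0$ lies in the $p$-group $N$; Proposition~\ref{prop:pA}(ii) forces $o(x)\mid o(w)$, hence $x=1$, and Berman--Higman gives $w=1$. Stage~2 is a straightforward combination of Proposition~\ref{prop:pA}(ii) applied to $\bar u$ with Remark~\ref{rem:quotients}.

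Stage~3, however, is not a proof but a hope. You have assembled two families of linear relations on the integers $\varepsilon_x(u)$: exact vanishing of \emph{fibre-sums} $\sum_{\bar x\sim\bar y}\varepsilon_x(u)$ indexed by classes $\bar y$ of $G/N$ with large $p$-part, and mod-$p$ vanishing of \emph{power-sums} $\sum_{x^{p}\sim s}\varepsilon_x(u)$ indexed by classes $s$ of $G$. These two partitions of the conjugacy classes of $G$ are in general unrelated --- neither refines the other --- and a congruence modulo $p$ cannot promote a fibre-sum identity to an individual integer vanishing. You assert that ``one disentangles the fibre-sums class by class'', but no mechanism is given, and I do not see one using only the listed ingredients. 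The base case $a=1$ is likewise only asserted: knowing that $u^m$ is a $p$-element of $1+\omega(N)\Z G$ and analysing \emph{its} partial augmentations gives no direct control over $\varepsilon_g(u)$ for $p$-regular $g$, which is what you need. Hertweck's actual argument (not reproduced in this paper) relies on a finer character-theoretic analysis of $p$-sections that goes beyond the toolbox collected in Section~2 here.
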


\begin{remark}\label{rem:quotients}
An easy but often useful observation when working with quotient groups is the following. Let $N$ be a normal subgroup of $G$ and denote by $\varphi: \mathbb{Z}G \rightarrow \mathbb{Z}G/N$ the linear extension of the natural projection from $G$ to $G/N$. Then for an element $g \in G$ and a unit $u \in \mathrm{V}(\mathbb{Z}G)$ we have
\[\varepsilon_{\varphi(g)}(\varphi(u)) = \sum_{\substack{x^G, \\ \varphi(x) \sim \varphi(g)}} \varepsilon_x(u) \] 
where the sum runs over the conjugacy classes of $G$.
\end{remark}

\subsection{HeLP}
An idea to obtain more restrictions on the partial augmentations of torsion units in $\mathrm{V}(\mathbb{Z}G)$ using the values of ordinary characters of $G$ was introduced by I.S. Luthar and I.B.S. Passi \cite{LP89}. If $\chi$ denotes an ordinary character of $G$ and $D$ a representation of $G$ realizing $\chi$ then $D$ can be linearly extended to the group ring $\mathbb{Z}G$. This provides a ring homomorphism from $\mathbb{Z}G$ to a matrix ring and thus units of $\mathbb{Z}G$ are mapped to invertible matrices. Hence $D$ extends to a representation of $\mathrm{V}(\mathbb{Z}G)$ and $\chi$ extends to a character of $\mathrm{V}(\mathbb{Z}G)$. Let $x_1,...,x_h$ be representatives of the conjugacy classes of elements in $G$. Since $\chi$ is a $\ZZ$-linear function we obtain
\[\chi(u) = \sum_{i = 1}^{h} \varepsilon_{x_i}(u) \chi(x_i) \ \ \text{for} \ \ u \in \mathrm{V}(\mathbb{Z}G). \]
Denote by $\chi_1,...,\chi_h$ the irreducible complex characters of $G$ and by $\operatorname{X}(G)$ the character table of $G$. So from the arguments above we see 
\begin{align}\label{eq:HeLP} 
 \begin{pmatrix} \chi_1(u) \\ \chi_2(u) \\ \vdots \\ \chi_h(u) \end{pmatrix} = \begin{pmatrix} \chi_1(x_1) & \chi_1(x_2) & \hdots & \chi_1(x_h) \\ \chi_2(x_1) &  \chi_2(x_2) & \hdots & \chi_2(x_h) \\ \vdots & \vdots & \ddots & \vdots \\ \chi_h(x_1) &  \chi_h(x_2) & \hdots & \chi_h(x_h) \end{pmatrix}\begin{pmatrix} \varepsilon_{x_1}(u) \\ \varepsilon_{x_2}(u) \\ \vdots \\ \varepsilon_{x_h}(u) \end{pmatrix} = \operatorname{X}(G) \begin{pmatrix} \varepsilon_{x_1}(u) \\ \varepsilon_{x_2}(u) \\ \vdots \\ \varepsilon_{x_h}(u) \end{pmatrix}.
\end{align} 
Since the character table of a group is an invertible matrix, equation \eqref{eq:HeLP} provides restrictions on the partial augmentations of $u$ once we obtain restrictions on the character values $\chi_1(u),...,\chi_h(u)$.

For a unit $u$ of finite order these restrictions follow from the fact that $D(u)$ is a matrix of order dividing the order of $u$. Thus $D(u)$ is diagonalizable and its eigenvalues are $o(u)$-th roots of unity. So there are only finitely many possibilities for the values of $\chi(u)$. Hence going through these possibilities for all the irreducible complex characters of $G$ and applying equation \eqref{eq:HeLP}, one obtains finitely many possibilities for the partial augmentations of $u$. If we assume moreover that the partial augmentations of proper powers of $u$ are known, say by induction, then the restrictions on the eigenvalues of $D(u)$ can be significantly strengthened since they may be obtained as the pairwise product of the eigenvalues of $D(u^d)$ and $D(u^e)$ where $d, e$ denote integers not coprime with $o(u)$ such that $d + e \equiv 1 \mod o(u)$.

Clearly if one assumes that $K$ is an algebraically closed field of characteristic $p$ not dividing $o(u)$ then the arguments of the last paragraph still apply. Fixing a correspondence between the complex roots of unity of order not divisible by $p$ and the roots of unity in $K$, as it is custom in modular representation theory, one can view the character $\chi$ as a $p$-Brauer character having complex values. It was shown by Hertweck \cite[Section 3]{HertweckBrauer} that if one takes $x_1,...,x_h$ to be only the representatives of $p$-regular conjugacy classes in $G$ and $\chi_1,...,\chi_h$ to be the irreducible $p$-Brauer characters of $G$ then equation \eqref{eq:HeLP} also applies. This modular extension of the idea of Luthar and Passi is in particular useful for simple non-abelian groups. It does however not provide new restrictions for solvable groups by the Fong-Swan-Rukolaine Theorem \cite[Theorem 22.1]{CR1}. 

The HeLP method can be implemented into a computer program as it has been done in the \texttt{GAP}-package \texttt{HeLP} \cite{HeLPPaper}.  The HeLP method has been applied for single groups, e.g. in the study of the Zassenhaus Conjecture for small groups as in \cite{BovdiHoefertKimmerle}, \cite{HoeKi} or \cite{SmallGroups} or to study non-solvable groups as e.g. in \cite{KonovalovM22} or \cite{KimmerleKonovalov}. It might also be used to study infinite series of groups possessing generic character tables as it was done in \cite{HertweckBrauer}, \cite{4primaryHeLP} or \cite{FermatMersenne}. In this paper we will apply the HeLP method in Sections \ref{quasisimple} and \ref{BigNormal}.

Note that when one knows the partial augmentations of a torsion unit $u \in \mathrm{V}(\mathbb{Z}G)$ and all it powers, as e.g.\ after the application of the HeLP method, one may compute the eigenvalues, with multiplicities, of $D(u)$ for any ordinary representation $D$ of $G$. This observation is often useful when combining the HeLP method with other ideas described below.

\subsection{Other algorithmic methods: Quotients, Partially Central Units and the Lattice Method}
An inductive approach to questions about torsion units in $\mathrm{V}(\mathbb{Z}G)$ may be taken when one possesses information on the torsion units in $\mathrm{V}(\mathbb{Z}G/N)$ where $N$ is some normal subgroup in $G$, since a homomorphism from $G$ to $G/N$ naturally extends to a homomorphism from $\mathrm{V}(\mathbb{Z}G)$ to $\mathrm{V}(\mathbb{Z}G/N)$. If one can control the fusion of conjugacy classes in the projection from $G$ to $G/N$ then one can also obtain restrictions on the partial augmentations of elements in $\mathrm{V}(\mathbb{Z}G)$ assuming some knowledge about the partial augmentations of the units in $\mathrm{V}(\mathbb{Z}G/N)$, cf.\ Remark \ref{rem:quotients}. This approach was taken by many authors in particular when studying classes of groups closed under quotients as e.g.\ in \cite{Hertweck_Metacyclic}. In this paper also quotients play a significant role in all our results.

Assume that some torsion unit $u \in \mathrm{V}(\mathbb{Z}G)$ is central in some Wedderburn component $B$ of the complex group algebra $\mathbb{C}G$, but its spectrum in this component does not coincide with the spectrum of any element in $G$. An idea to disprove the existence of such units was first used manually by C. H{\"o}fert \cite{HoefertDiplom} and recently developed as a \texttt{GAP}-program by A. Herman and G. Singh \cite{HermanSingh}. This is sometimes called the Partially Central Method and uses an explicit representation of $G$ to show that no element in $\mathbb{C}G$ having only integral coefficients can realize the given central unit in $B$. Since $u$ has no other conjugates inside $B$, this also proves that $u$ can not be globally conjugate to an element in $\mathbb{Z}G$. This method turns out be be useful for the study of small groups as demonstrated in \cite{SmallGroups}, but is also not always successful.

A further algorithmic method, particularly useful for the study of the Prime Graph Question (cf. Problem 4 of Section 4), was introduced in \cite{Gitter} and is known as the Lattice Method. Let $p$ be a prime, $(K,R,k)$ be a $p$-modular system for $G$ and $u \in \mathrm{V}(\mathbb{Z}G)$ a torsion unit of order divisible by $p$. The idea of the Lattice method is roughly that when $B$ is a block of the modular group algebra $kG$, $D$ an ordinary irreducible representation of $G$ belonging to $B$ with corresponding $RG$-lattice $L$ and $S$ a simple $kG$-composition factor of $\bar{L}$, where $\bar{.}$ denotes the projection from $R$ onto $k$, then the spectrum of $D(u)$ provides restrictions on the isomorphism type of $L$ as $R\langle u \rangle$-lattice and thus on the isomorphism type of $S$ as $k\langle \bar{u} \rangle$-module. The use of other $RG$-lattices whose reduction to $kG$ involve $S$ as a composition factor may finally lead to a contradiction to the existence of $u$, since in some situations the restrictions obtained on the isomorphism type of $S$ as $k\langle \bar{u} \rangle$-module may contradict each other. The Lattice Method has successfully been applied in the study of the Prime Graph Question and the Zassenhaus Conjecture for non-solvable groups in \cite{Gitter, 4primaryGitter}. We will apply the Lattice Method in Section \ref{quasisimple}.

\section{Amitsur groups}\label{Amitsur groups}

Herstein pointed out that all finite subgroups of division rings in positive characteristic $p$ are cyclic $p'$-groups. In \cite{Ami} Amitsur described the finite subgroups of division algebras in characteristic $0$; these groups are nowadays often called \emph{Amitsur groups}. Recall that a group is a Z-group if all its Sylow subgroups are cyclic. We will use a weaker version of Amitsur's classification suitable for us.

\begin{theorem}[Amitsur, {\cite[Theorem~2.1.4]{SW}}] If a finite group $G$ is a subgroup of a division algebra of characteristic $0$ then
\begin{enumerate}
 \item[(Z)] $G$ is a Z-group
 \item[(NZ)] or G is isomorphic to one of the following groups
	\begin{enumerate}
	\item $\mathcal{O}^{*}= \langle s,t|(st)^2=s^3=t^4\rangle$ (binary octahedral group),
	\item $\SL(2,5)$,
	\item $\SL(2,3)\times M$, with $M$ a group in (Z) of order coprime to $6$ and $2$ has odd order modulo $|M|$,
	\item $C_m \rtimes Q$, where $m$ is odd, $Q$ a quaternion group of order $2^t$ such that an element of order $2^{t-1}$ centralizes $C_m$ and an element of order $4$ inverts $C_m$,
	\item $Q_8\times M$ with $M$ a group in (Z) of odd order and $2$ has odd order modulo $|M|$.
	\end{enumerate}
\end{enumerate}
\end{theorem}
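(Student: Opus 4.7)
The plan is to reduce the problem to the structure of $\mathbb{Q}[G]$ and then analyse Sylow subgroups. If $G \leq D^\times$ for a division algebra $D$ of characteristic $0$, the $\mathbb{Q}$-subalgebra $\mathbb{Q}[G]$ is a finite-dimensional semisimple $\mathbb{Q}$-algebra embedded in $D$, hence contains no zero divisors, hence is itself a division algebra. Consequently $\mathbb{Q}[G]$ is a single simple Wedderburn component of $\mathbb{Q}G$ and carries a faithful irreducible representation of $G$. I would record this as the guiding principle: being Amitsur is equivalent to possessing a faithful irreducible $\mathbb{Q}$-representation whose Wedderburn component is a division algebra.

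Next I would restrict the Sylow structure. Any finite $p$-subgroup $P \leq G$ inherits an embedding into a division algebra, so $P$ has a faithful irreducible representation whose image generates a division algebra. Using the classical fact that finite $p$-subgroups of division rings of characteristic $0$ are cyclic (for odd $p$) or cyclic/generalised quaternion (for $p=2$), one sees that every Sylow $p$-subgroup of $G$ is cyclic for $p$ odd, and either cyclic or generalised quaternion for $p = 2$. If \emph{all} Sylow subgroups are cyclic we are immediately in case $(Z)$, so the remainder of the argument handles the case where a Sylow $2$-subgroup is a non-cyclic generalised quaternion group $Q_{2^t}$.

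For this case I would use the fact that a Hall $2'$-subgroup $M$ of $G$, having all Sylow subgroups cyclic, is itself a $Z$-group, and that $G$ sits as an extension of $M$ by a $2$-group isomorphic to some $Q_{2^t}$ (or to the binary tetrahedral/octahedral extensions). The core analysis is to determine, via the action of the $2$-part on $M$ and the requirement that the corresponding simple component of $\mathbb{Q}G$ remain a division algebra, exactly which such extensions occur. This forces either $G$ to be of Frobenius-type $C_m \rtimes Q_{2^t}$ with the prescribed action (item (d)), or to sit inside one of the distinguished groups $\mathcal{O}^*$, $\SL(2,5)$, $\SL(2,3) \times M$, $Q_8 \times M$. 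The number-theoretic hypothesis that $2$ has odd order modulo $|M|$ in items (c) and (e) would appear naturally as the condition ensuring that the cyclotomic algebra obtained from the action of $Q_8$ (resp.\ $\SL(2,3)$) on $M$ does not split into a matrix algebra.

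The main obstacle is this last step: separating the possible extensions and ruling out the remaining candidates requires a careful computation of Schur indices of cyclotomic algebras and crossed products, together with a cohomological analysis of the central extensions by a generalised quaternion group. The Sylow reduction and the general framework are comparatively routine; the delicate part is matching each surviving extension against one of the five families in (NZ) and verifying the arithmetic side conditions.
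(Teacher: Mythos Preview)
The paper does not give its own proof of this statement: Amitsur's theorem is quoted from \cite[Theorem~2.1.4]{SW} and used as a black box to set up the subsequent verification of (ZC1) for Amitsur groups. So there is no proof in the paper to compare your proposal against.

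Regarding your outline on its own merits: the opening reduction is sound --- the $\mathbb{Q}$-span of $G$ inside $D$ is a finite-dimensional domain, hence a division ring, hence a single simple component of $\mathbb{Q}G$ --- and the Sylow restriction (cyclic for odd $p$, cyclic or generalised quaternion for $p=2$) is the correct next step. However, your passage to a Hall $2'$-subgroup $M$ tacitly assumes $G$ is solvable. This is not automatic: $\SL(2,5)$ lies on the list precisely because it is a non-solvable Amitsur group, and it has no Hall $2'$-subgroup. In the actual Amitsur argument one first invokes the classification of finite groups with periodic cohomology (equivalently, groups acting freely on spheres, or groups whose Sylow subgroups are all cyclic or generalised quaternion) to isolate the non-solvable possibility $\SL(2,5)$ and only then carries out the extension/Schur-index analysis on the solvable remainder. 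Your sketch would need to insert this dichotomy before the Hall-subgroup step; as written, the reduction to ``$G$ is an extension of a $Z$-group $M$ by a $2$-group'' is not justified in the non-solvable case. The final step you flag as the main obstacle --- the Schur-index computations pinning down the arithmetic side conditions --- is indeed where the bulk of the work lies, and your description of how the ``$2$ has odd order modulo $|M|$'' condition arises is accurate.
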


In \cite[Theorem 3.5]{DJPM} it was proved that for an Amitsur group $G$, the order of a normalized torsion unit in $\ZZ G$ coincides with the order of an element in $G$. We now verify that even the first Zassenhaus Conjuecture holds for these groups.

\begin{theorem} Let $G$ be a finite subgroup of a division ring, then (ZC1) holds for $G$. \end{theorem}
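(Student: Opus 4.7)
The plan is to proceed by case analysis along Amitsur's classification, using as a starting point the (DJPM) theorem already cited: the order of any torsion unit $u \in \V(\ZZ G)$ equals the order of some element of $G$, which pins down the possible orders at the outset.

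For a Z-group, cyclic Sylow subgroups force $G$ to be metacyclic by the classical Hall--Zassenhaus theorem, so (ZC1) follows from Hertweck's theorem for metacyclic groups. Of the non-Z families, (ZC1) for $\SL(2,5)$ is the known result of Luthar--Trama, and the binary octahedral group $\mathcal{O}^*$ has order $48$ and can be dispatched directly by running HeLP on its character table (combined, if needed, with the Lattice Method at the prime $2$).

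The core of the argument is therefore the three infinite families (c), (d), (e). In all three, $G$ has a characteristic subgroup $N$ of odd order --- namely $M$ in (c) and (e), $C_m$ in (d) --- with quotient $G/N$ a small 2-local group: $\SL(2,3)$, $Q_8$, or $Q$ respectively. For a torsion unit $u \in \V(\ZZ G)$ of order $n$, the DJPM constraint combined with coprimality $\gcd(|N|,|G/N|) = 1$ forces $n = n_1 n_2$ with $n_1 \mid \exp(G/N)$, $n_2 \mid \exp(N)$, $\gcd(n_1, n_2) = 1$. Projecting $u$ to $\V(\ZZ(G/N))$ yields a torsion unit in a group for which (ZC1) is already known (quaternion groups are metacyclic, $\SL(2,3)$ is small and directly verifiable). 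Using Remark~\ref{rem:quotients}, this pins the partial augmentations of $u$ to be supported on the $G$-classes lying over a single $G/N$-class. The task then is to show that in fact exactly one $\varepsilon_x(u)$ is nonzero and equal to $1$, and that the same holds for every power $u^d$, after which Proposition~\ref{MRSW_Prop} gives (ZC1).

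The main obstacle will be case (d). Unlike (c) and (e), the product is not direct, so there is no ring homomorphism $\ZZ G \to \ZZ C_m$ to exploit. Instead the plan is to use the structural hypothesis that the cyclic subgroup of order $2^{t-1}$ in $Q$ centralizes $C_m$: this forces the kernel of the action $Q \to \mathrm{Aut}(C_m)$ to have index $2$, so $G$ in fact contains an abelian normal subgroup $A = C_m \times C_{2^{t-1}}$ of index $2$ with $G/A \cong C_2$. One then studies $u$ modulo $A$ (an abelian group ring, so Higman's theorem handles it trivially) to locate the support of $u$ in the two cosets of $A$, then applies Proposition~\ref{p-part} to the normal $2$-subgroup $\langle c \rangle$ to control the $2$-part of the partial augmentations, and finally uses the mod-$2$ congruences from the unnamed lemma of Section~2 together with the constraints already obtained from the $\ZZ Q$-projection to close the argument. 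Cases (c) and (e) then go through more directly because the product decomposition is direct: one projects onto both factors separately and combines the two halves of the coprime decomposition of $n$, with HeLP on the small 2-local factor of order at most $24$ handling whatever residual combinatorics remains.
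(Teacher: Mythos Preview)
Your case analysis inverts the difficulty: case (d) is in fact the easy one and case (c) is where the real work lies.

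For (d), you correctly identify the abelian normal subgroup $A = C_m \times C_{2^{t-1}}$ of index~$2$, but then miss the punchline: since $m$ is odd, $A$ is \emph{cyclic}, so $G$ is cyclic-by-$C_2$, hence metacyclic, and Hertweck's theorem applies immediately. Your proposed machinery with Proposition~\ref{p-part} and the mod-$2$ congruences is unnecessary.

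For (c), on the other hand, ``project onto both factors and combine'' is a genuine gap. Knowing that the images of $u$ in $\V(\ZZ\,\SL(2,3))$ and in $\V(\ZZ M)$ each have a single nonvanishing partial augmentation only tells you that certain \emph{row sums} and \emph{column sums} of the matrix $(\varepsilon_{(a,b)}(u))$ vanish; it does not force individual entries to vanish, because cancellation is possible. The known direct-product reduction (Hertweck, \cite[Proposition~8.1]{Hertweck_Metacyclic}) requires one factor to be nilpotent, and in (c) neither $\SL(2,3)$ nor a general Z-group $M$ is nilpotent. (This is exactly why the paper can dispose of (e) in one line via that proposition but must work for (c).) Your suggestion to run HeLP on $\SL(2,3)$ does not help here: the residual ambiguity lives in $\V(\ZZ G)$, not in $\V(\ZZ\,\SL(2,3))$, and $M$ ranges over an infinite family so no finite character computation can close it.

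The paper's route for (c) is different: rather than quotienting by $M$ or by $\SL(2,3)$, it quotients by the normal Sylow $2$-subgroup $P\cong Q_8$ of $G$. Then $G/P \cong C_3 \times M$ is an odd-order Z-group, so (ZC1) holds there. For a unit $u$ of mixed order the image in $G/P$ has strictly smaller order, so Proposition~\ref{p-part} forces every nonvanishing $\varepsilon_x(u)$ to sit at an $x$ whose $2$-part has the same order as the $2$-part of $u$. One then checks, conjugacy class by conjugacy class, that among the classes in the fibre over a given $G/P$-class exactly one has the correct $2$-part order; combined with Remark~\ref{rem:quotients} this pins $u$ to a single class. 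This use of Proposition~\ref{p-part} with respect to the normal $Q_8$ (rather than the odd-order $M$) is the idea you are missing.
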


\begin{proof} If $G$ is a Z-group then it is either cyclic or metacyclic and hence (ZC1) holds for $G$ \cite[Theorem 1.1]{Hertweck_Metacyclic}. The binary octahedral group was handled by Dokuchaev and Juriaans \cite[Proposition 4.2]{DJ} and $\SL(2,5)$ by Juriaans and Polcino-Milies \cite[Proposition 4.2]{JPM} (for those two groups even (ZC3) was verified). The groups in (NZ)(d) have cyclic normal subgroups of order $2^{t-1}m$ with an abelian quotient of order $2$, so we can again apply \cite[Theorem 1.1]{Hertweck_Metacyclic}. If $G = Q_8 \times M$ for some Z-group $M$ of odd order, then it is a direct product of a nilpotent group with a group for which the Zassenhaus Conjecture is known with coprime orders and the claim follows from \cite[Proposition 8.1]{Hertweck_Metacyclic}. So we are left with the groups in (NZ)(c).

Assume from now on that $G = \SL(2,3)\times M$, with $M$ a group in (Z) of order coprime to $6$ and let $u \in \V(\ZZ G)$ be a torsion unit. If $u$ has an order a divisor of $|M|$, then $(o(u), |\SL(2,3)|) = 1$.  Then we can consider the ring homomorphism $\ZZ G \to \ZZ G/\SL(2,3) \simeq \ZZ M$ induced by the projection $G \to G/\SL(2,3)$ to a group ring of a group for which the Zassenhaus Conjecture is known and apply \cite[Theorem 2.2]{DJ} to conclude that $u$ is rationally conjugate to an element of $G$. If $u$ has an order a divisor of $|\SL(2,3)|$ then a similar argument applies.

We now consider units having an order which has a common divisor with both, $6$ and the order of $M$. Note that $G$ has a normal Sylow $2$-subgroup $P$. Denote by $\varphi$ the natural ring homomorphism $\ZZ G \to \ZZ G/P$, which will also be denoted by bars, i.e.\ $\bar{x} = \varphi(x)$ for $x \in \ZZ G$.

Assume first that $u \in \V(\ZZ G)$ is of order $2m$ with $1 \not= m$ a divisor of $|M|$. If $\varepsilon_x(u) \not= 0$, then $o(x) \mid 2m$ by Proposition \ref{prop:pA}. The image $\bar{u}$ has order a divisor of $m$, in particular strictly smaller order than $u$. So by Proposition \ref{p-part} the only partial augmentations of $u$ that are potentially non-zero are those at classes of group elements of order $2m_0$ for $m_0$ a divisor of $m$. Let $w \in G$ be an element whose natural projection onto $\SL(2,3)$ is trivial. Then the conjugacy classes that are mapped under $\varphi$ onto the conjugacy class of $\bar{w}$ are exactly the classes of $w$, $zw$ and $tw$, where $z$ and $t$ denote elements of $G$ of order $2$ and $4$, respectively. Hence $\varepsilon_{\bar{w}}(\bar{u}) = \varepsilon_{w}(u) +  \varepsilon_{zw}(u) +  \varepsilon_{tw}(u) = \varepsilon_{zw}(u)$, since the order of $w$ and $tw$ is not of the form $2m_0$. As the Zassenhaus Conjecture holds for the metacyclic group $G/P$ we conclude that $u$ has exactly one non-vanishing partial augmentation and is rationally conjugate to a group element by Proposition \ref{MRSW_Prop}. 

If $u$ if of order $4m$ where $1 \not= m$ is a divisor of $|M|$ then analogues arguments as in the case $2m$ show that $u$ is rationally conjugate to an element of $G$.
Now assume that $u \in \V(\ZZ G)$ is of order $3m$ with $1 \not= m \mid |M|$. Then $\bar{u}$ is conjugate within $\QQ \bar{G}$ to an element of $\bar{G}$. As $(o(u), |P|) = 1$ we can use \cite[Theorem 2.2]{DJ} to conclude that $u$ is rationally conjugate to an element of $G$.

Finally assume that $o(u) = 6m$ with $1 \not= m$ a divisor of $|M|$. If $\varepsilon_x(u) \not= 0$ for some $x\in G$, then $o(x) \mid 6m$ by Proposition \ref{prop:pA}. The image $\bar{u}$ has strictly smaller order than $u$, so again by Proposition \ref{p-part} the only partial augmentations of $u$ that are potentially non-zero are those at classes of group elements of order $2m_0$ for $m_0$ a divisor of $3m$.  First consider an element $w \in G$ that projects to $1$ when mapped to $\SL(2,3)$. Then there are $3$ conjugacy classes of $G$ that are mapped on the conjugacy class of $\bar{w}$ in $\bar{G}$, namely those of $w$, $zw$ and $tw$, where $z$ and $t$ are elements of $G$ of order $2$ and $4$, respectively. Then $\varepsilon_{\bar{w}}(\bar{u}) = \varepsilon_{w}(u) +  \varepsilon_{zw}(u) +  \varepsilon_{tw}(u) = \varepsilon_{zw}(u)$. Now assume that $w \in G$ maps to an element of order $3$ in $\SL(2,3)$. Observe that for each element $s \in P$, $ws$ is either conjugate to $w$ or to $zw$. Hence exactly those two conjugacy classes map to the conjugacy class of $\bar{w}$. Thus $\varepsilon_{\bar{w}}(\bar{u}) = \varepsilon_{w}(u) +  \varepsilon_{zw}(u) = \varepsilon_{zw}(u)$. As the Zassenhaus Conjecture holds for $G/P$ we can conclude that $u$ has exactly one non-trivial partial augmentation. By Proposition \ref{MRSW_Prop}, $u$ is rationally conjugate to an element of $G$. The theorem is proved. \end{proof}

\section{From (IP) to (SIP) and (PQ) }

From the point of view of the unit group $\V (\Z G)$ the counterexample to the isomorphism problem (IP) simply shows that different group bases may be not isomorphic.  
Nevertheless a lot of positive results have been established and it is
justified to say that (IP) has almost a positive
answer. Indeed for each finite group $G$ there is an abelian extension
$E := A \sdp G$ such that (ZC2), and so also (IP), has a positive answer,
i.e. different group bases of $\Z E$ are conjugate within $\Q G.$ This
follows from the $F^*$-theorem which has been discovered by
K.W.Roggenkamp and L.L.Scott \cite{Sco:87}, \cite[Theorem 19]{Ro:91} and has now finally a published account
\cite[Theorem A and p.350]{HwJofAlg2016}, see also \cite[p.180]{KH}. With respect to semilocal
coefficient rings the $F^*$-theorem (in its automorphism version) 
may be stated as follows.  
\vskip1em

{\bf $F^*$-theorem.} \label{Fstar} 
   Let $G$ be a finite group. Denote by $\pi(G)$ the set of primes dividing the order of $G$. 
  Let $S $ be the semilocal ring  $ \Z_{\pi(G)} .$  
  Suppose that the generalized Fitting subgroup $F^*(G)$ is a $p$-group and let $\alpha $ be an $S$-algebra automorphism of $SG$ preserving augmentation.    Then $\alpha $ is given as the
  composition of an automorphism induced from a group automorphism of
  $G$ followed by a central automorphism (i.e.\ given by conjugation with a unit of $\Q G$).   
\vskip1em

The assumption on $G$ in the preceding theorem holds for all group
bases of $\Z G$ and for all group bases of $\Z (G \times G).$ Thus it
follows for groups whose generalized Fitting subgroup $F^*(G)$ is a
$p$-group that group bases of $\Z G$ are rationally conjugate, cf.\ \cite[5.3]{KiHab}.   
For a given group $G$ let $A$ be the additive group of $\mathbb{F}_pG$. Consider
the semidirect product $E = A \sdp G,$ where the action of $G$ is
just given by the multiplication of $G$ on $A .$ Clearly $C_E(A) = A$
and thus the $F^*$-theorem establishes (ZC2) and therefore a strong answer to (IP) for $\Z E.$         

The preceding paragraph shows that (IP) has a positive
solution for many important classes of finite groups. Thus the following subgroup variation came in the focus of research
within the last years.
\vskip1em   
{\bf Problem 1.} (Subgroup Isomorphism Problem \textbf{(SIP)}). Classify all finite
groups $H$ such that whenever $H$ occurs for a group $G$ as subgroup
of $\mathrm{V}(\Z G)$ then $H$ is isomorphic to a subgroup of $G .$ 

If $H$ has this property we say that (SIP) holds for $H .$    
\vskip1em
The known general results on (SIP).
\vskip1em
\begin{itemize}
\item[4.1.] (SIP) holds for cyclic groups of prime power order \cite{CohnLivingstone}. 
\item[4.2.] (SIP) is valid for $C_p \times C_p$, $p$ a prime \cite{KimmerleC2C2, HertweckCpcp}.  
\item[4.3.] (SIP) is valid for $C_4 \times C_2$ \cite{SIP}. 
\end{itemize}

This shows that with respect to general finite groups very
weak general facts are known about torsion units of the integral
group ring. On the related question when for a given specific group $G$
all torsion subgroups of $\mathrm{V}(\Z G)$ are isomorphic to a subgroup of $G$
much more is known. In Section 5 we settle this
question for all groups occurring as Frobenius complements.   
\vskip1em
Whether (SIP) holds for finite $p$-groups is certainly one of the
major open questions. Clearly this leads to Sylow like theorems for 
$\Z G.$ Even for conjugacy of finite $p$-groups within $\Q G$ no counterexample is known.    
\vskip1em

{\bf Problem 2.} \cite[p-ZC3,p.1170]{DJ} Is a Sylow like theorem \textbf{(SLT)} valid in $\mathrm{V}(\Z G)$, i.e.\ is each $p$-subgroup
of $\mathrm{V}(\Z G)$ rationally conjugate to a subgroup of $G$ ? 
\vskip1em

We say that (SLT) holds for a given group $G$ if in $\mathrm{V}(\Z G)$ Problem 2 has
an affirmative answer for all primes $p$ and that $(\text{SLT}_p)$ holds if
this is the case for a specific prime $p .$ If each subgroup of
prime power order $\mathrm{V}(\Z G)$ is isomorphic to a subgroup of $G$ we speak
of a weak Sylow like theorem \textbf{(WSLT)} and use the notion $(\text{WSLT}_p)$ if this holds for
a specific prime $p .$  
\vskip1em

Denote by $G_p$ a Sylow $p$-subgroup of $G$. Summary of known results on (SLT).
\vskip1em
\begin{itemize}
\item[4.4.] $(\text{SLT})_p$ holds when $G_p$ is normal \cite{Seh93}. 
\item[4.5.] (SLT) holds when $G$ is nilpotent-by-nilpotent \cite{DJ}.
\item[4.6.] $(\text{SLT})_p$ holds when $G_p$ is abelian and $G$ is $p$-constrained \cite[Proposition 3.2]{BaechleKimmerle}.
\item[4.7.] (SLT) holds for $\operatorname{PSL}(2,p^f)$ where $p$ denotes a prime \cite{SylowPSL} if $f = 1$ or $p =2$. It also holds for $\operatorname{PSL}(2,p^2)$ if $p \leq 5$ \cite{PassmanProc}. Moreover (WSLT) holds if $f = 2$ \cite{HertweckHoefertKimmerle}.
\item[4.8.] $(\text{SLT})_2$ is valid if $|G_2| \leq 8 $, unless $G \cong A_7$ \cite{SIP}. $(\text{WSLT})_p$ is valid if $G_p$ is cyclic \cite{KimmerleC2C2, HertweckCpcp} and $({\text{WSLT}})_2$ is proved if $G_2$ is generalized quaternion \cite[Theorem 4.1]{KimmerleSylow} or a dihedral group \cite{SIP}. 
\end{itemize}

For Frobenius groups we refer to the next section. 
The following two further special cases of (SIP) have been studied extensively
in the last decade.
\vskip1em
{\bf Problem 3.} (\textbf{(SIP-C)}, Problem 8 in \cite{Seh93}) Let $G$ be a finite
group. Is each cyclic subgroup
of $\mathrm{V}(\Z G)$ isomorphic to a subgroup of $G$?  
\vskip1em
{\bf Problem 4.} (Prime Graph Question \textbf{(PQ)}) Let $G$ be a finite group. Do $G$
and $\mathrm{V}(\Z G)$ have the same prime graphs? Equivalently, is (SIP)
valid for cyclic groups of order $p \cdot q ,$ where $p$ and $q$ are
different primes?  

\vskip1em
We say that (SIP-C)  or (PQ)  holds for a group $G$ if Problem 3 or Problem 4 respectively has a positive
answer for $G .$ Note that (ZC1) implies (SIP-C)  and this in turn implies (PQ) . 
So both problems
may be also considered as test problems for the first Zassenhaus
conjecture.   

\vskip1em
Summary of known results on (SIP-C)  and (PQ) . 
\vskip1em
\begin{itemize}
\item[4.9.] (SIP-C)  holds for soluble groups \cite{Hertweck_OTU}. Moreover (SIP-C)  is valid  
  for any soluble extension of a group $Q$ for which each torsion unit of order $n$ has non-vanishing partial augmentations on a class of elements of $Q$ of order $n$, cf. Lemma \ref{lem:LiftSIPC}. This is the case when (ZC1) holds for $\Z Q .$ \\
  (PQ) is valid for any soluble extension of a group $Q$ for which (PQ) holds.  \cite[Proposition 4.3]{KimmerlePQ}.
\item[4.10.] (SIP-C)  holds for Frobenius groups \cite[Corollary 2.5]{KimmerleKonovalov}.
\item[4.11.] (PQ)  holds for all simple groups $\PSL (2,p)$, for $p$ a prime \cite{HertweckBrauer}. \\ (PQ)  also holds for any almost simple group with socle isomorphic to $\PSL(2,p)$ or $\PSL(2,p^2)$ \cite{4primaryHeLP}.
\item[4.12.] If each almost simple image of the group $G$ has an order 
      divisible by three primes then (PQ)  has an affirmative 
      answer \cite[Theorem 3.1]{KimmerleKonovalov}, \cite{Gitter}.
\item[4.13.] (PQ)  holds for many almost simple groups whose socle has an order divisible by at most $4$ different primes \cite{4primaryGitter}. 
\end{itemize}
\vskip1em

We like to point out that the computational tools explained in Section 2 play a prominent role in proving these results. A typical example for this is 
4.11. By theoretical arguments the proof is reduced to almost simple groups whose order is divisible by exactly three primes, cf. \cite[\S 4]{KimKon:2015}. CFSG shows that there are only 8 such simple groups with this property. Now a computer algebraic examination of the almost simple groups arising from those simple groups yields successfully the 
result. The HeLP - method does not suffice to deal with all cases. In the case 
of automorphism groups of $A_6$ the final piece is obtained by the Lattice method. \\     
For further results on almost simple groups of small order see Section \ref{quasisimple}.

\section{Frobenius groups and complements}\label{sec:FrobeniusGroups}
The torsion units of the integral group rings of Frobenius groups were
considered in \cite{DJPM, JPM, KimmerlePQ, BovdiHertweck, PassmanProc}.
In particular SLT holds cf.\ \cite{PassmanProc} and (PQ) is known \cite{KimmerlePQ}. However, none of the Zassenhaus conjectures
has been established completely. For many specific Frobenius groups (ZC1) and (ZC3) are known.
The following Theorem \ref{th:Frobenius} should be seen as a first important step towards (ZC3) for Frobenius groups.   

\begin{theorem}\label{th:Frobenius}
Let $G$ be a Frobenius complement. Then each torsion subgroup of $\mathrm{V}(\Z G)$ is isomorphic to a subgroup of $G$. 
\end{theorem}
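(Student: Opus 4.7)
The plan is to invoke the classification of Frobenius complements as finite subgroups of division algebras in characteristic zero (Zassenhaus/Amitsur), so that $G$ falls into one of the cases (Z) or (NZ)(a)--(e) of the Amitsur classification recalled in Section~\ref{Amitsur groups}. The theorem is then verified case by case, leveraging the earlier results and the (ZC1) for Amitsur groups proved in the previous section.

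The two sporadic cases are immediate: for $G = \mathcal{O}^*$ (case (NZ)(a)) and $G = \SL(2,5)$ (case (NZ)(b)), the full (ZC3) is established in \cite{DJ} and \cite{JPM}, yielding the stronger rational conjugacy and \emph{a fortiori} the desired statement. In the metacyclic cases — (Z) (Z-groups), (NZ)(d) (where $C_m \times C_{2^{t-1}}$ is cyclic normal of index $2$), and (NZ)(e) (where $C_4 \times M$ is cyclic normal of index $2$, using that $M$ has odd order) — I would combine (ZC1) for metacyclic groups \cite[Theorem~1.1]{Hertweck_Metacyclic} with the Sylow restrictions of items 4.1 and 4.8. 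This shows that every Sylow subgroup of a torsion subgroup $H \leq \V(\ZZ G)$ embeds into the corresponding Sylow of $G$ (cyclic for odd primes, cyclic or generalized quaternion for $p=2$), and the cyclic-by-$C_2$ structure of $G$ then allows one to embed $H$ globally by matching its image in $G/N$ (where $N$ is the cyclic normal subgroup of index at most $2$) with its faithful action on the cyclic $N$-part.

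The main obstacle is case (NZ)(c), $G = \SL(2,3) \times M$, which is not metacyclic. Following the strategy of the Amitsur (ZC1) proof in Section~\ref{Amitsur groups}, let $P \cong Q_8$ be the normal Sylow $2$-subgroup of $G$ and consider the projection $\varphi : \ZZ G \twoheadrightarrow \ZZ(G/P)$, where $G/P \cong C_3 \times M$ is metacyclic. For $H \leq \V(\ZZ G)$, the image $\bar H = \varphi(H)$ embeds in $G/P$ by the metacyclic case just handled, while $(\text{WSLT}_2)$ (item 4.8) gives an embedding of the Sylow $2$-subgroup of $H$ into $Q_8$. The real technical work — the crux of the proof — is to glue these two embeddings into a single embedding $H \hookrightarrow G$ by pinning down the action of the Sylow $3$-subgroup of $H$ on its Sylow $2$-subgroup so as to match the $\SL(2,3)$ action in $G$. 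This should follow from Proposition~\ref{p-part} applied to the normal $2$-subgroup $P$, which restricts the non-vanishing partial augmentations of torsion units of $H$, combined with the partial-augmentation constraints inherited from the quotient $\bar H$ as in Remark~\ref{rem:quotients} and a case analysis on orders of elements mirroring the Amitsur (ZC1) argument.
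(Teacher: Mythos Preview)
Your plan rests on a false identification: Frobenius complements are \emph{not} the same as Amitsur groups. Every finite subgroup of a division algebra is a Frobenius complement, but the converse fails. The classification the paper actually uses (from \cite[\S 18]{Passman}) contains strictly more groups than the Amitsur list you invoke. Concretely, you miss the cases where a $C_2$ sits on top:
\begin{itemize}
\item Case 3(i): $G$ is a $C_2$-extension of $\SL(2,5)\times M$. For $M=1$ this is the double cover of $S_5$ occurring as a Frobenius complement, a group of order $240$ which does \emph{not} embed in any division ring and hence appears nowhere in your case list. This is by far the hardest case in the paper's proof, requiring a detailed order-by-order analysis of torsion subgroups of $\V(\ZZ G_1)$ for $G_1$ this double cover, using (ZC3) for $S_5$, (SLT), (SIP-C), and the explicit subgroup structure of $S_5$.
\item Case 2b(i): the $C_2$-extension of $\SL(2,3)\times M$ with $M\neq 1$ (the case $M=1$ is the binary octahedral group, which you do cover).
\item Case 2c and parts of 2a where the odd part $M$ is genuinely metacyclic rather than cyclic.
\end{itemize}
So your proposal simply does not address the group that carries almost all the work.

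A secondary remark: even for the Amitsur cases you do treat, you are making life harder than necessary. For Z-groups (case~(Z)) the full (ZC3) is known by Valenti's result (cited as (5.3) in the paper), and for $G=\SL(2,3)\times M$ with $(\lvert M\rvert,6)=1$ (your ``main obstacle'' (NZ)(c)) one uses directly that (ZC3) holds for $\SL(2,3)$ \cite[Theorem~4.3]{DJPM} and for the Z-group $M$, together with the elementary coprime-direct-product reduction~(5.2). No gluing of partial augmentations via Proposition~\ref{p-part} is needed there; that machinery was used in Section~\ref{Amitsur groups} because the target was the finer statement (ZC1), not merely subgroup isomorphism.
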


\begin{proof} By \cite[\S 18]{Passman} the structure of Frobenius
  complements $G$ is as follows. \\
 Denote by $W$ the Fitting subgroup of $G$. 

\begin{itemize}
\item[(1)] If $G_2$ is cyclic then $G$ is a Z-group.  
\item[(2a)] Suppose that $W_2$ is cyclic. Then $G$ is metabelian.
\item[(2b)] Suppose that $W_2 \cong Q_8$. Then    
$$ (i) \ \ G = \boxxedcomp{C_2}{\SL (2,3) \times M} 
  \ \ \mbox{or}  \ \ (ii) \ \ G = \boxedcomp{ \SL (2,3) \times M} 
  $$
$$ \mbox{or} \ \ (iii) \ \ G = \boxedcomp{Q_8  \times M},$$   
where $M$ is a  metacyclic Z-group of odd order coprime to the order of $\SL (2,3)$ and $Q_8 $ respectively.

\item[(2c)] Suppose that $W_2 \cong Q_{2^n}$ with $n \geq 4 .$
Then   
$$  \ \ G = \boxxedcomp{C_2}{C_{2^{n-1}} \times M} \ \ $$ 
where $M$ is a
  metacyclic Z-group of odd order and $G_2 \cong Q_{2^n} .$  
\item[(3)] If $G$ is insoluble then 
$$ (i) \ \ G = \boxxedcomp{C_2}{\SL (2,5) \times M} \ \ \mbox{or}  \ \ (ii) \ \ G = \boxedcomp{\SL (2,5) \times M} ,$$ 
where $M$ is a
  metacyclic Z-group of odd order coprime to the order of $\SL (2,5) .$  

\end{itemize}
\vskip1em

The following results settle several of these cases immediately. 
\begin{itemize}
\item[(5.2)] If $G$ is a direct product of two groups $H_1$ and $H_2$ of coprime order then each torsion subgroup of $\mathrm{V}(\Z G)$ is isomorphic to a subgroup of $G$ if and only if the same holds in $\mathrm{V}(\Z H_1) $ and $\mathrm{V}(\Z H_2) .$
\item[(5.3)] For Z-groups (ZC3) is valid \cite{Valenti}.
\item[(5.4)] If $G$ has an abelian normal subgroup $A$ such that $G/A$ is 
abelian then each torsion subgroup of $\mathrm{V}(\Z G)$ is isomorphic to a subgroup of $G .$ This follows from the small group ring sequence 
 $$ 0 \lra A \cong \Z G \cdot I(A) / I(G) \cdot I(A) \lra \Z G / I(G) \cdot I(A)  \lra  \Z G/A \lra 0 $$
together with the well known facts that torsion subgroups of $\mathrm{V}(\Z G) $ are trivial provided $G$ is an abelian  torsion group and that by \cite{MarSeh} \\ 
$\mathrm{V}(\Z G) \cap (1 + I(A)I(G))$ is torsionfree.       
\end{itemize}
So Case 1 follows from 5.3, Cases 2a and 2b(iii) from 5.4. Moreover (ZC3) is valid for $\SL(2,3)$, $\SL(2,5)$ \cite[Theorem 4.3]{DJPM} and for $p$-groups \cite{Weiss88}, \cite{ThomG}. Thus applying 5.2 and 5.3 we see that the the theorem is valid in the Cases 2b(ii), 2c(ii), 3(ii) rsp.
\vskip1em
Case 3(i). We first consider subgroups of $\mathrm{V}(\Z G)$ whose order divide 
$240 . $ Then factoring out the normal metacyclic group $M$ 
these subgroups are subgroups of $\mathrm{V}(\Z G/M)$. 
Then $G_1 = G/M$ is the double cover of the $S_5$ occurring as Frobenius complement.
(ZC1) holds for $\mathrm{V}(\Z G_1)$ \cite{BovdiHertweck}, (SIP-C)  holds for $G$ by \cite[Corollary 2.5]{KimmerleKonovalov} and a Sylow like theorem by \cite{PassmanProc}. Thus because the order of a torsion subgroup $H$ of $\mathrm{V}(\Z G_1)$ has to divide $|G_1| = 240 $ the remaining orders of subgroups of $\mathrm{V}(\Z G_1)$ are: 
$$ 240,120,80,60,48,40,30,24,20,15,12,10 \ \text{and} \ 6 .$$ 
If $|H| = 240$ then $H$ is a group basis. So we have to show that (IP) holds for $G_1 .$ This may be easily seen looking at the ordinary character table of $G_1.$ 
The normal subgroup correspondence shows that $\Z G_1 \cong \Z H$ implies that 
$H$ has to be as well a double cover of $S_5 .$ But the character tables of the two double covers of $S_5$ are different. Thus $H \cong G_1 .$
Each subgroup of even order of $\mathrm{V}(\Z G_1)$ contains the centre $Z$ of $G_1$ which is isomorphic to $C_2 .$ Thus subgroups of order 10 and 6 have to be cyclic. 
Factoring out $Z$ we see that a subgroup $H$ of order $120$ has to map onto a subgroup $\bar{H}$ of $\mathrm{V}(\Z S_5)$ of order $60.$ Because (ZC3) holds for $S_5$ by \cite{DJ} we see that $\bar{H} \cong A_5 .$ There are only two insoluble groups of order 120 which map onto $A_5 .$  The group $C_2 \times A_5$ has more than one involution. Thus it follows that $H \cong \SL (2,5) .$  \\
Also there are no subgroups of order $80, 60, 30, 15$ rsp.\ because $G_1/Z = S_5$  has no subgroups of order $40, 30, 15 $ rsp. \\
Assume now that $|H| = 40.$ Because a Sylow like theorem is valid in $\Z G_1$ we know that $H$ has a Sylow $2$-subgroup $H_2$ isomorphic to $Q_8 $ or to $C_8 .$ Suppose that $H_2 \cong Q_8 .$ Then $H_2/Z \cong C_2 \times C_2 .$ But $S_5$  has no subgroup of type $C_5 \times C_2 \times C_2 .$ Thus $H_2  \cong C_8 .$  Assume that $H$ is isomorphic to a dihedral group of order 40. Then $H/Z \cong D_{10} .$ But $H/Z$ has to be a Frobenius group of order 20 we conclude that $H \cong C_5 : C_8 $ which is indeed isomorphic a subgroup of $G_1 .$
Similarly one sees that subgroups of order 20 are isomorphic to $C_5 : C_4$, a subgroup of index 2 in $C_5 : C_8 .$ \\
Let $|H| = 48 .$ Then we know that $H/Z \cong S_4 .$  Moreover $Q_{16} \cong H_2 .$ Suppose that $H \cong Q_{16} \times C_3 .$ Then $H/Z \cong D_4 \times C_3 \not \cong S_4 .$ Clearly $H$ must have a normal subgroup of order $8 $ and contains a non-split central extension of $A_4$ of order $24 .$ This must be the binary tetrahedral group. Hence an examination of the groups of order 48 (e.g.\ with \texttt{GAP} \cite{GAP}) shows that $H$ is a binary octahedral group of order $48 .$ \\
Similarly one sees that subgroups of order $24$ are binary tetrahedral groups or $C_3 : Q_8 $ which maps onto a subgroup of $S_5$ isomorphic to $S_3 \times C_2 .$ Both occur as subgroups of $G_1 .$ \\
If $H$ has order 12, $H_2 \cong C_4 .$ Thus $H \cong C_{12} $ which must occur in $G_1$ because (SIP-C)  holds. \\

Now let $H$ be a subgroup of $\mathrm{V}(\Z G)$ whose order is not divisible by $2,3$ or $5.$ Then by reduction modulo $\SL(2,5)$ we get that $H$ is isomorphic to a subgroup of 
$\Z (G/\SL (2,5)).$ But $\bar{G} = G/\SL (2,5)$ is a $Z$ - group. Thus (ZC3) holds for $\bar{G} $ and $H$ is isomorphic to a subgroup of $M .$ \\
Finally, if $H$ is a subgroup such that $H$ maps onto a subgroup $\bar{H}$ of $\mathrm{V}(\Z \bar{G})$  (with $\bar{G} = G / \SL(2,5)$) of even order $m > 2.$ Then as before $\bar{H}$ is isomorphic to a subgroup of $\bar{G} .$ Let $K$ be the image of $H$ under the map onto $\Z G_1$ and $M_H$ the kernel of $H$ under this map. Clearly $M_H$ is isomorphic to the subgroup of index 2 of $\bar{H} .$  
 
$H$ is a semidirect product of the form $M_H \sdp K .$ The action of $K$ on $M_H$ is determined modulo $K/C_K(M_H)$ and therefore given by $\bar{H} .$ Thus its 
isomorphism type is given by $\bar{H}$ and $M_H .$    
\vskip1em 
Cases 2b(i) and 2c. That subgroups of 
$Q_{2^n} , n \geq 4$ are isomorphic to subgroups of $G$ follows in this case at once from \cite{MarSeh} (even (ZC3) holds by \cite{Weiss88}, \cite{ThomG}). 
Moreover (ZC3) holds for the binary octahedral group by \cite[Theorem 4.7]{DJ}. Now we 
can argue as in the case before and the proof is complete. 
\end{proof}

\section{(SIP-C) for almost quasisimple groups}\label{quasisimple}
In this section we will consider non-solvable groups and analyse how much the known methods can provide for our problems. We will concentrate on automorphic and central non-split extensions of non-abelian simple groups. Recall that a group $G$ is called \emph{almost simple} if there is a simple non-abelian group $S$ such that $G$ is isomorphic to a subgroup of the automorphism group of $S$ containing the inner automorphisms of $S$, i.e. $S \cong \operatorname{Inn}(S) \leq G \leq \operatorname{Aut}(S)$. Moreover a group is called qausisimple if it is a central non-split extension of a non-abelian simple group $S$. We will call a group \emph{almost quasisimple} if it is a central non-split extension of an almost simple group.

\begin{example}
One of the smallest almost quasisimple groups for which the Zassenhaus Conjecture is open is the symmetric group of degree $6$. The other group of the same size for which the Zassenhaus Conjecture is also open is the Mathieu group of degree $10$. In this example we will concentrate on the example of a possible involution in $\mathrm{V}(\mathbb{Z}S_6)$ which is of particular interest since its existence would also provide a counterexample to the Torsionfree Kernels Question, cf. Problem 5 in Section \ref{BigNormal},  and even more so since the involution would lie in the kernel of the the most natural representation of the group -- the permutation representation on $6$ points.

We provide details. Let $G$ be the symmetric group of degree $6$. Denote by $3a$ the conjugacy class of involutions in $G$ which have no fixed points in the natural action, i.e. elements of cycle type $(2,2,2)$, by $2b$ the class of involutions of cycle type $(2,2,1,1)$ and by $2c$ the conjugacy class of involutions of cycle type $(2,1,1,1,1)$. The HeLP method is not sufficient to exclude the existence of an involution $u \in \mathrm{V}(\mathbb{Z}G)$ satisfying $(\varepsilon_{2a},\varepsilon_{2b}, \varepsilon_{2c}) = (-1,1,1)$. Moreover the \texttt{GAP}-function \texttt{HeLP\_MultiplicitiesOfEigenvalues} provided by the \texttt{HeLP}-package allows to construct an element of $\mathbb{Q}G$ having the partial augmentations of $u$. Thus to show that $u$ does not exist in $\mathbb{Z}G$ it must be shown that the conjugacy class of this element in $\mathbb{Q}G$ has trivial intersection with the $\mathbb{Z}$-order $\mathbb{Z}G$ in $\mathbb{Q}G$. We give this element explicitly. For that let

\[\mathbb{Q}G \cong \mathbb{Q} \times \mathbb{Q} \times \mathbb{Q}^{5\times 5} \times \mathbb{Q}^{5\times 5} \times \mathbb{Q}^{5\times 5} \times \mathbb{Q}^{5\times 5} \times \mathbb{Q}^{9\times 9} \times \mathbb{Q}^{9\times 9} \times \mathbb{Q}^{10\times 10} \times \mathbb{Q}^{10\times 10} \times \mathbb{Q}^{16\times 16} \]

be the Wedderburn decomposition of $\mathbb{Q}G$. Here the first factor of the form $\mathbb{Q}^{5\times 5}$ is understood to correspond to the representation of $G$ obtained by cancelling out the trivial module from the $6$-dimensional natural permutation module of $G$. Moreover the fourth factor of the form $\mathbb{Q}^{5 \times 5}$ corresponds to the representation obtained from cancelling out the trivial module from the permutation module obtained by the other $6$-transitive action of $G$ (i.e. the one corresponding to the other conjugacy class of subgroups isomorphic to $S_5$ in $G$) and tensoring this module with the signum representation. Moreover the first factor of the form $\mathbb{Q}^{10 \times 10}$ is understood to correspond to an irreducible representation of $G$ which has character values $-2$ on the class $2a$. In this understanding a representative of the conjugacy class of $u$ in $\mathbb{Q}G$ is given by the following element:
\begin{align*}
(&(1), (1), \\ &\text{diag}(1,1,1,1,1), \text{diag}(1,\text{-}1,\text{-}1,\text{-}1,\text{-}1), \text{diag}(1,\text{-}1,\text{-}1,\text{-}1,\text{-}1),
\text{diag}(1,1,1,1,1), \\
&\text{diag}(1,1,1,1,1,\text{-}1,\text{-}1,\text{-}1,\text{-}1), \text{diag}(1,1,1,1,1,\text{-}1,\text{-}1,\text{-}1,\text{-}1), \\
&\text{diag}(1,1,1,1,1,1,\text{-}1,\text{-}1,\text{-}1,\text{-}1), \text{diag}(1,1,\text{-}1,\text{-}1,\text{-}1,\text{-}1,\text{-}1,\text{-}1,\text{-}1,\text{-}1), \\
&\text{diag}(1,1,1,1,1,1,1,1,\text{-}1,\text{-}1,\text{-}1,\text{-}1,\text{-}1,\text{-}1,\text{-}1,\text{-}1)).
\end{align*}

\end{example}

Regarding (SIP-C) more can be achieved for almost quasisimple groups.

\begin{theorem}\label{th:AlmostQuasiSimple}
Let $G$ be a almost quasisimple group and let $S$ be the only non-abelian composition of factor of $G$. If $S$ has smaller order than $\operatorname{PSL}(3,3)$ then (SIP-C) holds for $G$.
\end{theorem}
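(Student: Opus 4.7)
The proof is a finite case analysis carried out via the central quotient. For almost quasisimple $G$, the centre $Z := Z(G)$ is abelian (hence soluble), and one checks easily from the definition of almost quasisimple that $Q := G/Z$ is almost simple with socle $S$. By Lemma~\ref{lem:LiftSIPC} (cited in result~4.9), to establish (SIP-C) for $G$ it therefore suffices to verify the following \emph{lift condition} for $Q$: every torsion unit of order $n$ in $\V(\Z Q)$ has a non-vanishing partial augmentation on a conjugacy class of $Q$ consisting of elements of order~$n$. Note that this condition is strictly weaker than (ZC1) for $Q$.

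The non-abelian simple groups $S$ with $|S| < |\PSL(3,3)| = 5616$ are, by the Atlas, $A_5$, $\PSL(2,7)$, $A_6$, $\PSL(2,8)$, $\PSL(2,11)$, $\PSL(2,13)$, $\PSL(2,17)$, $A_7$, $\PSL(2,19)$ and $\PSL(2,16)$. Combined with their Schur multipliers and outer automorphism groups (all small), this yields a finite list of almost simple quotients $Q$ on which the lift condition must be checked.

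For many $Q$ in the enumeration, (ZC1) is already known in the literature --- for instance for $A_5$, $S_5$ and $\PSL(2,7)$ by Luthar--Passi, and for the almost simple groups with socle $\PSL(2,q)$ covered by~4.11 and \cite{4primaryHeLP} --- and the lift condition follows trivially. For the remaining $Q$ the plan is to run the HeLP method of Section~2: the HeLP inequalities combined with Proposition~\ref{prop:pA} normally either exclude a hypothetical torsion unit of order $n$ or force $\varepsilon_x(u) \neq 0$ for some $x \in Q$ with $o(x) = n$, which is precisely the lift condition. Whenever HeLP alone leaves ambiguous partial-augmentation patterns corresponding to a forbidden order, the Lattice Method applied to well-chosen blocks --- as in \cite{Gitter, 4primaryGitter} --- should close the gap.

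The main obstacle is expected in the cluster of automorphic extensions of $A_6$, namely $M_{10}$, $\operatorname{PGL}(2,9)$, $S_6$ and $\operatorname{P\Gamma L}(2,9)$ together with their central covers, and possibly in $A_7$ and $S_7$, since (ZC1) for these groups is open and HeLP alone does not settle all partial-augmentation patterns (as illustrated by the involution in $\V(\Z S_6)$ of the Example preceding the theorem). However, the lift condition is genuinely weaker than (ZC1): the ambiguous involution of that Example has $\varepsilon_{2b}(u) = \varepsilon_{2c}(u) = 1 \neq 0$ on classes of elements of order $2$, so it does \emph{not} obstruct the lift condition. What still has to be ruled out in these hard cases is only the existence of torsion units of orders not occurring in $Q$, for which the combination of HeLP and the Lattice Method should suffice on a case-by-case basis.
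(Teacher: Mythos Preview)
Your strategy is essentially the paper's, with one organisational difference: you reduce \emph{every} almost quasisimple $G$ to its almost simple quotient $Q=G/Z(G)$ via result~4.9 and then aim to verify the lift condition on $Q$, whereas the paper runs \texttt{HeLP} directly on all fifty almost quasisimple groups and only invokes Lemma~\ref{lem:LiftSIPC} for the covers of $A_7$ and $S_7$ (where the direct computation does not terminate). Both routes are legitimate and converge on the same handful of hard cases needing the Lattice Method. Your observation that the lift condition is strictly weaker than (ZC1) --- so that, e.g., the open $S_6$ involution is no obstruction --- is exactly right and is the reason the reduction works.

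Two concrete gaps remain. First, your appeal to 4.11 and \cite{4primaryHeLP} for (ZC1) on almost simple groups with socle $\PSL(2,q)$ is a misreading: those references establish (PQ), which neither implies (ZC1) nor the lift condition. You will still have to run HeLP on these $Q$ (it does succeed for most of them). Second, you have not identified $\PSL(2,16)$ and $\PSL(2,16).2$ as cases where HeLP alone is insufficient; the paper has to exclude units of order $6$ (respectively $12$) there by an explicit Lattice-method argument, and the same will be true in your approach. More generally, the proposal is a plan rather than a proof: the phrases ``should close the gap'' and ``should suffice'' stand in for the actual HeLP runs and the block-by-block Lattice computations that the paper carries out in detail for $\PSL(2,16).2$ and $S_7$. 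Until those are executed you have an outline, not a proof.
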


To prove Theorem \ref{th:AlmostQuasiSimple} we will use the HeLP and lattice methods. For some of the groups we study, the \texttt{GAP}-package implementing the HeLP method \cite{HeLPPaper} is not strong enough computationally and we will use the following result.

\begin{lemma}\label{lem:LiftSIPC}
Let $G$ and $H$ be groups such that $G$ contains a normal $p$-subgroup $N$ and $G/N \cong H$. Assume moreover that when $u \in \mathrm{V}(\mathbb{Z}H)$ is a torsion unit of order $k$ then there exists an element $h \in H$ of order $k$ such that $\varepsilon_h(u) \neq 0$. Then (SIP-C) holds for $G$.
\end{lemma}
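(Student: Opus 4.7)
Let $u \in \mathrm{V}(\mathbb{Z}G)$ be a torsion unit of order $k = p^a m$ with $\gcd(p,m)=1$, let $\varphi: \mathbb{Z}G \to \mathbb{Z}H$ be the natural projection induced by $G \twoheadrightarrow G/N = H$, write $\bar u = \varphi(u)$, and set $o(\bar u) = p^b m'$ with $b\leq a$ and $m' \mid m$. The plan is to produce an element of $G$ of order exactly $k$, which proves (SIP-C).

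The key preliminary step is to show $m' = m$. For this I would establish the auxiliary fact that the kernel of $\varphi|_{\mathrm{V}(\mathbb{Z}G)}$, namely $1 + I(G,N)$, contains no non-trivial torsion of order coprime to $p$. Indeed, after passing to the $p$-adic completion, $I(G,N)$ sits inside the Jacobson radical of $\mathbb{Z}_p G$ (because $N$ is a $p$-group, so $I_{\mathbb{Z}_p}(N)$ is nilpotent modulo $p$, and $p$ itself lies in $J(\mathbb{Z}_p G)$), and expanding $(1+x)^q = 1$ for $x$ in the radical and $q$ coprime to $p$ yields $x \cdot (q + r) = 0$ with $r$ in the radical and hence $q + r$ invertible, so $x = 0$. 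Applying this to $u_{p'}^{m'}$, which is a power of $u$ of order $m/m'$ and maps to $(\bar u_{p'})^{m'} = 1$, forces $u_{p'}^{m'} = 1$, hence $m' = m$.

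Once $o(\bar u) = p^b m$ is established, I would invoke the hypothesis on $H$ to obtain $h \in H$ of order $p^b m$ with $\varepsilon_h(\bar u) \neq 0$. By Remark \ref{rem:quotients}, $\varepsilon_h(\bar u)$ is the sum of $\varepsilon_g(u)$ over those $G$-classes whose $\varphi$-image is the class of $h$, so some such $g \in G$ satisfies $\varepsilon_g(u) \neq 0$. Proposition \ref{prop:pA}(ii) yields $o(g) \mid k$, and since $p^b m = o(\varphi(g)) \mid o(g)$, we get $o(g) = p^c m$ with $b \leq c \leq a$. If $b = a$ we are done; otherwise $\bar u$ has strictly smaller order than $u$, so Proposition \ref{p-part} forces the $p$-part of $g$ to have order $p^a$, i.e.\ $c = a$, and again $o(g) = k$. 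The main technical obstacle I anticipate is the preliminary fact about $p'$-torsion in $1 + I(G,N)$, since it is what bridges the hypothesis on $H$ (which controls only $o(\bar u)$) to control of the full order $o(u)$; without it, Proposition \ref{p-part} by itself would be insufficient to pin down the $p'$-part of $o(g)$.
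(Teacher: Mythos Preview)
Your proof is correct and follows essentially the same route as the paper's: find $h\in H$ of order $o(\bar u)$ with $\varepsilon_h(\bar u)\neq 0$, lift to $g\in G$ with $\varepsilon_g(u)\neq 0$, and then pin down $o(g)=o(u)$ by combining $o(g)\mid o(u)$ (Proposition~\ref{prop:pA}) with Proposition~\ref{p-part} for the $p$-part. The only substantive difference is that you make explicit the auxiliary fact $o(u)_{p'}=o(\bar u)_{p'}$ (your Jacobson-radical argument over $\mathbb{Z}_p$ showing $1+I(G,N)$ has no nontrivial $p'$-torsion), whereas the paper uses this implicitly in the sentence ``the $p'$-part of the order of $g$ and the $p'$-part of the order of $u$ coincide'', deferring to \cite{Hertweck_OTU}. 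Your unified treatment of the cases $b=a$ and $b<a$ versus the paper's case split $o(\bar u)=o(u)$ / $o(\bar u)<o(u)$ is purely cosmetic.
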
 
\begin{proof}
The proof follows the line of the proof of \cite[Theorem]{Hertweck_OTU}. Denote by $\varphi: \mathbb{Z}G \rightarrow \mathbb{Z}G/N \cong \mathbb{Z}H$ the linear extension of the natural homomorphism from $G$ to $G/N$. We will apply the bar-convention to this ring homomorphism. So let $u \in \mathrm{V}(\mathbb{Z}G)$ be a torsion unit. If $\bar{u}$ has the same order as $u$ then $u$ has the same order as an element in $G$ as by assumption $\bar{u}$ has the same order as an element in $H$. Assume on the other hand that $\bar{u}$ has strictly smaller order than the order of $u$. By assumption there is an element $h \in H$ such that $\varepsilon_h(\bar{u}) \neq 0$ and $h$ has the same order as $\bar{u}$. So there is a $g \in G$ with $\bar{g} = h$ and $\varepsilon_g(u) \neq 0$. Then the $p'$-part of the order of $g$ and the $p'$-part of the order of $u$ coincide. But by Proposition \ref{p-part} the $p$-parts of the order of $g$ and the order of $u$ also coincide and so $g$ has the same order as $u$.
\end{proof}

\begin{proof}[Proof of Theorem \ref{th:AlmostQuasiSimple}] There are ten non-abelian simple groups of order smaller than the order of $\operatorname{PSL}(3,3)$ which give rise to 50 almost quasisimple groups. All of these groups are listed in the ATLAS and all their character and Brauer tables are available in the \texttt{GAP} character table library \cite{CTblLib}. So in principle we can apply the HeLP package to all these groups. However it turns out that for central extension of the alternating and symmetric group of degree $7$ these computations do not finish in a day, while finishing in a few minutes for all other groups. Among the groups not having $A_7$ as a composition factor it turns out that HeLP is sufficient to prove (SIP-C) for all groups except groups containing $A_6$ as a normal subgroup of index $2$ and the groups $\operatorname{PSL}(2,16)$ and a group containing $\operatorname{PSL}(2,16)$ as a normal subgroup of degree $2$. We handle these cases separately:

The two groups containing $A_6$ as a normal subgroup of index $2$ for which the HeLP method is not sufficient to prove (SIP-C) are $\operatorname{PGL}(2,9)$ and the Mathieu group of degree $10$. For both these groups it remains to rule out the existence of units of order $6$ in their normalized unit group of the integral group ring. This has been already done in \cite{Gitter} using the lattice method. For the group $G = \operatorname{PSL}(2,16)$ it remains to rule out the existence of units if order $6$ in $\mathrm{V}(\mathbb{Z}G)$ and this has been also achieved using the lattice method in \cite[Theorem C]{4primaryGitter}.

Next let $G$ be a group of automorphisms of $\operatorname{PSL}(2,16)$ in which the group of inner automorphisms of $\operatorname{PSL}(2,16)$ has index $2$. To prove (SIP-C) for $G$ it remains to show that there are no torsion units of order $12$ in $\mathrm{V}(\mathbb{Z}G)$. HeLP provides us with two possibilities for the partial augmentations of a unit $u \in \mathrm{V}(\mathbb{Z}G)$ of order $12$, both of which have the same partial augmentations on $u^2$. So ruling out the latter partial augmentations for units of order $6$ will prove (SIP-C) for $G$. We will use the lattice method to do so. Denote by $2a$ the conjugacy class of involutions in $G$ which lies in $\operatorname{PSl}(2,16)$ and by $3a$ the conjugacy class of elements of order $3$ in $G$. The critical unit $u$ of order $6$ has partial augmentations equal to $0$ on all conjugacy classes except $2a$ and $3a$ and furthermore $(\varepsilon_{2a}(u), \varepsilon_{3a}(u)) = (4,-3)$ and the only class on which the partial augmentation of $u^3$ does not vanish is $2a$. For an ordinary character $\chi$ of $G$ denote by $\chi'$ it's $3$-modular reduction. Denote by $\mathbf{1}$ the trivial character of $G$. There are irreducible complex characters $\chi$ (a constituent of the lift of the Steinberg character of $\operatorname{PSL}(2,16)$ and $\psi$ of degree $16$ and $17$ respectively such that $\chi'$ is also irreducible as a $3$-modular Brauer character and $\psi' = \mathbf{1}' + \chi'$. Both characters $\chi$ and $\psi$ only take integral values. Thus by a theorem of Fong \cite[Corollary 10.13]{Isaacs} there exists a $3$-adically complete discrete valuation ring $R$ unramified over the $3$-adic integers such that there are $R$-representations $D_\chi$ and $D_\psi$ of $G$ realizing $\chi$ and $\psi$ respectively. The partial augmentations of $u$ and its powers allow us to compute the eigenvalues of $u$ under these representations, e.g.\ using the \texttt{GAP}-command \texttt{HeLP\_MultiplicitiesOfEigenvalues} from the \texttt{HeLP}-package. Denote by $\zeta$ a primitive $3$rd root of unity. Then
\begin{align*}
D_\chi(u) &\sim \mathrm{diag}(1,1,\zeta,\zeta,\zeta,\zeta^2,\zeta^2,\zeta^2, -1,-1,-1,-1,-\zeta,-\zeta,-\zeta^2,-\zeta^2), \\
D_\psi(u) &\sim \mathrm{diag}(1,1,1,1,1,\zeta,\zeta,\zeta^2,\zeta^2, -\zeta,-\zeta,-\zeta,-\zeta,-\zeta^2,-\zeta^2-,\zeta^2,-\zeta^2).
\end{align*}
Denote by $L_\chi$ and $L_\psi$ full $RG$-lattices corresponding to $D_\chi$ and $D_\psi$ respectively. When an $RG$-lattices $L$ is considered as an $R\langle u \rangle$-lattices it decomposes into a direct sum $L \cong L^+ \oplus L^-$ such that all direct summands of $L^*$ as $R\langle u^3 \rangle$-module are trivial while all direct summands of $L^-$ as $R\langle u^3 \rangle$-module are non-trivial by \cite[Proposition 1.3]{Gitter}. Denote by $\bar{.}$ the reduction modulo the maximal ideal of $R$, also with respect to modules, and let $k$ be the field obtained by factoring out the maximal module from $R$. Then from the eigenvalues given above and \cite[Proposition 1.4]{Gitter} we obtain that $\bar{L}_\chi^-$ has exactly two indecomposable direct summands of $k$-dimension at least $2$ while $\bar{L}_\psi^-$ has four such summands. But from $\psi' = \mathbf{1}' + \chi'$ we know that $\bar{L}_\chi^-$ and $\bar{L}_\psi^-$  must be isomorphic, since if $S$ denotes a simple $kG$-module corresponding to $\chi'$ then both these modules are isomorphic to $S^-$, i.e. the direct summand of $S$ as $k\langle \bar{u} \rangle$-module consisting of the non-trivial direct summands of $S$ as $k\langle \bar{u}^3 \rangle$-module. This provides a final contradiction to the existence of $u$.

It remains to show (SIP-C) for non-split central extensions of the alternating and symmetric group of degree 7. Denote by $2a$ the conjugacy class of double transpositions in $A_7$ and $S_7$, i.e. elements of cycle type $(2,2,1,1,1)$ and by $3a$ and $3b$ the conjugacy classes of elements of order $3$ where the latter is of cycle type $(3,3,1)$. Note that all these three classes are the same in $A_7$ and $S_7$. The Schur multiplier of both groups is cyclic of order $6$, i.e. the maximal cyclic non-split extension is by a cyclic group of order $6$. Thus by Lemma \ref{lem:LiftSIPC} it will be enough to show that when $u \in \mathrm{V}(\mathbb{Z}A_7)$ or $u \in \mathrm{V}(\mathbb{Z}S_7)$ is a unit of order $k$ then there exists a $g \in A_7$ or $g \in S_7$ respectively such that $\varepsilon_g(u) \neq 0$ and $g$ is of order $k$. Applying HeLP to $A_7$ and $S_7$ one finds that if $u$ is a unit not satisfying this condition then $u$ is of order $6$. Moreover $u$ satisfies 
\[(\varepsilon_{2a}(u), \varepsilon_{3a}(u), \varepsilon_{3b}(u)) \in \{(-2,2,1), (-2,1,2) \},\]
the partial augmentations of $u$ at all other elements vanish and $u^3$ is rationally conjugate to an element of $2a$ while $u^2$ is rationally conjugate to a $3$-element in the conjugacy class $C$ of $G$ such that $\varepsilon_C(u) = 1$. In particular we obtain that showing the non-existence of such a unit in $\mathrm{V}(\mathbb{Z}S_7)$ implies the non-existence of such a unit in $\mathrm{V}(\mathbb{Z}A_7)$.

So assume that $G = S_7$ and assume that $u$ is a unit of order $6$ in $\mathrm{V}(\mathbb{Z}G)$ as described in the last paragraph. Again we will use the lattice method to show that $u$ does not exist. The case when $\varepsilon_{3a} = 2$ will be called Case i) and the case that $\varepsilon_{3b}(u) = 2$ will be called ii). Denote by $\operatorname{sig}$ the character of $G$ corresponding to the signum representation. For an ordinary characters $\chi$ denote once more by $\chi'$ the corresponding $3$-Brauer character. $G$ possesses an irreducible $3$-Brauer character $\varphi$ of degree $13$ and two irreducible characters $\chi$ and $\psi$ of degree $14$ such that
\[\chi' = \mathbf{1}' + \varphi \ \ \text{and} \ \ \psi' = \operatorname{sig}' + \varphi. \]
Let again $D_\chi$ and $D_\psi$ be $R$-representations of $G$ corresponding to $\chi$ and $\psi$ respectively where $R$ is a $3$-adically complete discrete valuation ring unramified over the $3$-adic integers. From the given partial augmentations of $u$ and its powers we can compute the eigenvalues of $u$ under these representations. Denote by $\zeta$ a primitive $3$rd root of unity.\\
Case i):
\begin{align*}
D_\chi(u) &\sim \mathrm{diag}(1,1,\zeta,\zeta,\zeta,\zeta^2,\zeta^2,\zeta^2, -1,-1,-\zeta,-\zeta,-\zeta^2,-\zeta^2), \\
D_\psi(u) &\sim \mathrm{diag}(1,1,\zeta,\zeta,\zeta,\zeta^2,\zeta^2,\zeta^2, -1,-1,-1,-1,-\zeta,-\zeta^2).
\end{align*}
Case ii):
\begin{align*}
D_\chi(u) &\sim \mathrm{diag}(1,1,\zeta,\zeta,\zeta,\zeta^2,\zeta^2,\zeta^2, -1,-1,-1,-1,-\zeta,-\zeta^2), \\
D_\psi(u) &\sim \mathrm{diag}(1,1,\zeta,\zeta,\zeta,\zeta^2,\zeta^2,\zeta^2, -1,-1,-\zeta,-\zeta,-\zeta^2,-\zeta^2). 
\end{align*}
And moreover in both cases $\operatorname{sig}(u) = 1$. Let $L_\chi$ and $L_\psi$ be full $RG$-lattices corresponding to $\chi$ and $\psi$ respectively and denote by $\bar{.}$ the reduction modulo the maximal ideal of $R$. Let $k$ be the quotient of $R$ by its maximal ideal and let $S$ be a simple $kG$-module corresponding to $\varphi$. When viewed as $k\langle \bar{u} \rangle$-module $S$ decomposes into a direct sum $S \cong S^+ \oplus S^-$ such that $S^-$ contains all direct summands of $S$ as $k\langle \bar{u}^3 \rangle$-module which are not trivial. An analogues decomposition applies for $\bar{L}_\chi$ and $\bar{L}_\psi$. Then from \cite[Propositions 1.3]{Gitter}, the $3$-modular decomposition behaviour and the eigenvalues of $u$ under $\mathbf{1}$ and $\operatorname{sig}$ we conclude that $\bar{L}_\chi^- \cong S^- \cong \bar{L}_\psi^-$. However from \cite[Proposition 1.4]{Gitter} we know that in Case i) $\bar{L}_\chi^-$ has exactly two indecomposable summands of degree at least $2$ while $\bar{L}_\psi^-$ has only one such summand and in Case ii) $\bar{L}_\chi^-$ has exactly one indecomposable summands of degree at least $2$ while $\bar{L}_\psi^-$ has two such summand. This contradicts the existence of $u$ and finishes the proof. \end{proof}

\section{On big normal subgroups}\label{BigNormal}

The following question has not yet been systematically studied, but it appears naturally in the questions mentioned above and might be of independent interest.

\textbf{Problem 5} (Torsionfree Kernel Question \textbf{(TKQ)}): Let $K$ be a field
of characteristic zero and $G$ a finite group. Let $B$ be a
faithful block of $K G$ and $\pi$ be the projection from the units of $KG$ onto $B$. 
Is $\Ker \pi \cap \mathrm{V}(\Z G) $ torsion free? \\

We remark that a big area in the study of units in integral group
rings of finite groups is devoted to the study of ``big subgroups'' of
$\mathrm{V}(\mathbb{Z}G)$. This involves questions on the generation
of units of infinite order, free non-abelian subgroups of
$\mathrm{V}(\mathbb{Z}G)$, generators of subgroups which have finite
index in $\mathrm{V}(\mathbb{Z}G)$ and others. For more details 
we refer to recent monograph on these topics \cite{JespersDelRio1, JespersDelRio2}.

In this section we present a little idea how
torsion units may be used to find such big normal subgroups which are
torsion free and of finite index. We also make transparent how HeLP
and its companions may be used to answer TKQ. Note that a related question on the
existence of a torsion free complement has been studied extensively in
the 80's.   

Note that the hypothesis of the next lemma is valid if (ZC1) holds for
$\Z G.$ 

\begin{lemma} Let $G$ be a finite group. Suppose that elements of prime order of $\mathrm{V}(\Z G)$ are rationally
  conjugate to elements of $G .$ 
  
  Let $R$ be a field of characteristic zero. Suppose that $G$ is a subgroup of $GL(n,R)$ and let $\tau :\Q
    G \lra M_n(R) $ be the ring homomorphism which is the unique
    extension of  a given
    injective group homomorphism $G \lra GL(n,R) .$ 
    Let $K$ be the kernel of the group homomorphism $\tau\mid_{\mathrm{U}(\mathbb{Q}G)}: \mathrm{U}(\mathbb{Q}G) \rightarrow \operatorname{GL}(n,R)$. Then
    $K \cap \mathrm{V}(\mathbb{Z}G)$ is torsion free.       
 \end{lemma}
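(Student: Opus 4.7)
The plan is to argue by contradiction: suppose $K \cap \mathrm{V}(\mathbb{Z}G)$ contains a nontrivial torsion unit $u$, and derive a contradiction using the hypothesis on prime-order units. The first step is a reduction to prime order. If $u \neq 1$ has order $n$ and $p$ is any prime divisor of $n$, consider $v := u^{n/p}$. Then $v$ has order exactly $p$, and since $K$ is a subgroup of $\mathrm{U}(\mathbb{Q}G)$ and $u \in K$, also $v \in K$; moreover $v \in \mathrm{V}(\mathbb{Z}G)$ because $\mathrm{V}(\mathbb{Z}G)$ is closed under taking powers. Thus it suffices to rule out torsion units of prime order in $K \cap \mathrm{V}(\mathbb{Z}G)$.

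Now I would apply the hypothesis to $v$: there exist $g \in G$ with $o(g) = p$ and a unit $x \in \mathrm{U}(\mathbb{Q}G)$ such that $x^{-1} v x = g$. Applying the ring homomorphism $\tau$ (which sends $\mathrm{U}(\mathbb{Q}G)$ into $\operatorname{GL}(n,R)$) to this identity yields
\[
\tau(g) \;=\; \tau(x)^{-1}\,\tau(v)\,\tau(x) \;=\; \tau(x)^{-1}\,I_n\,\tau(x) \;=\; I_n,
\]
since $v \in \ker(\tau|_{\mathrm{U}(\mathbb{Q}G)}) = K$.

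The final step invokes the injectivity of $\tau$ on $G$. By construction $\tau$ extends the injective group homomorphism $G \hookrightarrow \operatorname{GL}(n,R)$, so $\tau(g) = I_n$ forces $g = 1_G$. But $g$ has order $p > 1$, a contradiction. Hence no nontrivial torsion unit of $\mathrm{V}(\mathbb{Z}G)$ lies in $K$, proving that $K \cap \mathrm{V}(\mathbb{Z}G)$ is torsion free.

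The argument is essentially routine once the two ingredients are in place, so I do not expect a genuine obstacle; the only point requiring a moment of care is verifying that the hypothesis, which is phrased only for prime-order units, suffices by the reduction $u \mapsto u^{n/p}$, and that rational conjugacy in $\mathbb{Q}G$ is preserved under the ring homomorphism $\tau$ (which is immediate because $\tau$ is a homomorphism of $\mathbb{Q}$-algebras sending units to units).
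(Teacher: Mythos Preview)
Your proof is correct and follows essentially the same approach as the paper's: reduce to a prime-order torsion unit in the kernel, invoke the rational-conjugacy hypothesis, and apply $\tau$ to obtain $\tau(g)=I_n$ for a nontrivial $g\in G$, contradicting injectivity of $\tau|_G$. The paper's version is terser (it simply asserts the prime-order reduction and writes $\tau(u)=1\neq\tau(g)$), but the logic is identical.
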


\begin{proof} By assumption $M_n(R)$ is a $\Q$-vector space. Thus
  $\tau $ extends uniquely. If $K$ is not torsion free then it has an
  element $u$ of prime order $p .$ By assumption 
there is a unit $v \in \Q G$ such that $v^{-1}uv = g \in G.$ But then
  $\tau (u) = 1 \neq \tau (g)$.
\end{proof}

\begin{example}\label{example_PSL_3_3} Let $G = \PSL(3,3)$. It is readily checked using the \texttt{GAP}-package \texttt{HeLP} that normalized units of $\V(\ZZ G)$ of order a prime $r$ are rationally conjugate to elements of $G$, except possibly for $r = 3$. However in this situation the command \texttt{HeLP\_MultiplicitiesOfEigenvalues} can be used to see that no torsion unit of order $3$ is contained in the kernel of any irreducible representation of $G$ of degree larger than $1$. Hence the previous corollary can be applied with any irreducible representation of $G$ different from the principal one and (TKQ) has a positive answer for each block of $\C G$, while (ZC1) is unknown for this group. 
\end{example}

\begin{proposition}\label{mink} Let $G$ be a finite group. Let $B \cong M_n(\Q)$ be a faithful block of the
  Wedderburn decomposition of $\Q G$ and let $p$ be a prime. 
Assume that either
\begin{itemize}
\item[i)] $p$ is an odd prime or $p = 4$ or
\item[ii)] $p = 2$ and $|G|$ is odd.
\end{itemize}  
Let $\pi $ be the projection of $\mathrm{U}(\Q G)$
  onto $B$. Then 
  \[(\Ker \pi \cap \mathrm{V}(\Z G))\cdot (1 + p \Z G \cap \mathrm{V}(\Z
  G))\]
   is a torsion free normal subgroup of $\mathrm{V}(\Z G)$ of finite index.    
\end{proposition}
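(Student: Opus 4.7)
The plan is to prove three claims about $N := (\Ker\pi\cap\V(\Z G))\cdot ((1+p\Z G)\cap\V(\Z G))$: that it is normal in $\V(\Z G)$, of finite index, and torsion free. Normality is formal: $\Ker\pi$ is normal in $\mathrm{U}(\Q G)$ as the kernel of a group homomorphism, and $1+p\Z G$ is normal in $\mathrm{U}(\Z G)$ because $p\Z G$ is a two-sided ideal. Intersecting with $\V(\Z G)$ gives normal subgroups of $\V(\Z G)$, and so does their product.

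Finite index follows from reduction modulo $p$ (or modulo $4$ in the case $p=4$): the induced group homomorphism $\V(\Z G)\to\mathrm{U}((\Z/p\Z)G)$ has finite target and kernel $(1+p\Z G)\cap\V(\Z G)$, so this factor already has finite index in $\V(\Z G)$, whence so does the larger product $N$.

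The core of the proof is torsion-freeness. Let $u\in N$ be of finite order and write $u=vw$ with $v\in\Ker\pi\cap\V(\Z G)$ and $w\in(1+p\Z G)\cap\V(\Z G)$. Then
\[
  \pi(u)=\pi(v)\pi(w)=\pi(w)\in 1+p\,\pi(\Z G),
\]
and $\pi(u)$ has finite order in $B^\times\cong\GL_n(\Q)$. Since $\pi(\Z G)$ is a $\Z$-order in $B=M_n(\Q)$ and every $\Z$-order in $M_n(\Q)$ can be conjugated into the maximal order $M_n(\Z)$, pick $x\in\GL_n(\Q)$ with $x^{-1}\pi(\Z G)x\subseteq M_n(\Z)$. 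Then $A:=x^{-1}\pi(u)x$ is a torsion element of $\GL_n(\Z)$ lying in $1+pM_n(\Z)$. In case (i), Minkowski's theorem---the kernel of the reduction $\GL_n(\Z)\to\GL_n(\Z/m\Z)$ is torsion free for $m\geq 3$---forces $A=I$. In case (ii), i.e.\ $p=2$ and $|G|$ odd, the Cohn--Livingstone theorem forces the order $d$ of $A$ to be odd; expanding $A^d=(I+2B)^d=I$ and using that $d$ is invertible modulo $2$ yields $B\in 2M_n(\Z)$, so $A\in I+4M_n(\Z)$, and Minkowski with $m=4$ again gives $A=I$. In all cases $\pi(u)=1$, hence $u\in\Ker\pi\cap\V(\Z G)$.

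The genuine obstacle is then to deduce $u=1$ from $u\in\Ker\pi\cap\V(\Z G)$ and $u$ torsion, which is exactly the Torsion-free Kernel Question for the faithful block $B$. The immediately preceding lemma settles this under the assumption that prime-order torsion units of $\V(\Z G)$ are rationally conjugate to elements of $G$; in applications this hypothesis is secured either via (ZC1) for primes (invoked through the lemma) or by a direct computational verification with HeLP and the partial-augmentation tools, as illustrated in Example \ref{example_PSL_3_3}. Granted this input, $\Ker\pi\cap\V(\Z G)$ contains no nontrivial torsion, whence $u=1$ and the proposition is proved.
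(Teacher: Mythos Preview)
Your argument via Minkowski is exactly the paper's: choose an integral realization so that $\pi(\Z G)\subseteq M_n(\Z)$, and use that the reduction $\GL(n,\Z)\to\GL(n,\Z/p\Z)$ has torsion-free kernel for $p$ odd (or $p=4$) and only $2$-torsion for $p=2$. The paper's proof is terser than yours but follows the same line; your conjugation of the order $\pi(\Z G)$ into $M_n(\Z)$ and your handling of case~(ii) just make explicit what the paper compresses into one sentence.

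You are also right about the final step. The paper's proof concludes that ``each torsion element of $\V(\Z G)$ injects into the finite group $\GL(n,\Z/p\Z)$'', but this assertion already presupposes that $\pi$ itself is injective on torsion, i.e.\ (TKQ) for $B$, since $\Ker\pi\subseteq\Ker(\kappa\circ\pi)$. Indeed, $\Ker\pi\cap\V(\Z G)$ is one of the two factors of $N$, so torsion-freeness of $N$ \emph{implies} (TKQ) for $B$; the proposition as written is therefore at least as strong as the open Problem~5 for blocks of this shape. In the paper's actual application (the subsequent proposition on minimal simple groups) the (TKQ) input is supplied separately by verifying, via the Steinberg character, that no prime-order torsion unit lies in $\Ker\pi$ --- exactly the mechanism you describe. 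So your reading of the proposition as conditional on (TKQ), secured either through the preceding lemma or by a direct character/HeLP verification, matches how the paper uses it, and your proof is the careful version of what the paper sketches.
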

 
\begin{proof} We may choose an integral representation of $G,$ 
  i.e.\ $\pi$ maps $\Z G$ into $\GL(n,\Z) .$ Consider the reduction $\kappa: \GL(n,\Z) \lra
  \GL(n,\Z / p\Z). $ By a classical result of Minkowski \cite[Lemma~9]{GurLor} the
  map $\kappa $ is injective on
  torsion elements if $p \neq 2.$ For $p=2$ the kernel is an elementary-abelian $2$-group. Thus under the 
  assumptions each
  torsion element of $\mathrm{V}(\Z G)$ injects into the finite group
  $\GL(n,\Z / p\Z). $    
\end{proof}

The preceding construction may be applied especially in the situation
of symmetric groups (with respect to almost each non-trivial block)
because $\Q$ is a splitting field for $S_n$ and only few blocks are
not faithful.    

\begin{proposition} 
	Let $G$ be a minimal simple group. Then $\V(\ZZ G)$ has a torsion-free normal subgroup of finite index constructed as in Proposition \ref{mink}.
\end{proposition}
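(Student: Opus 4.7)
The plan is to invoke Thompson's classification of minimal simple groups and then apply the lemma preceding Proposition~\ref{mink} together with Proposition~\ref{mink} itself. Thompson's theorem exhibits every minimal simple group $G$ as one of $\PSL(2,2^{p})$ with $p$ prime, $\PSL(2,3^{p})$ with $p$ an odd prime, $\PSL(2,p)$ with $p>3$ prime and $5 \mid p^{2}+1$, $\PSL(3,3)$, or $\Sz(2^{p})$ with $p$ an odd prime. In each case it suffices to verify (a) that $\QQ G$ has a faithful Wedderburn component $B \cong M_{n}(\QQ)$ and (b) that $\Ker\pi \cap \V(\ZZ G)$ is torsion free, the latter being supplied by the preceding lemma as soon as one knows that prime-order torsion units of $\V(\ZZ G)$ are rationally conjugate to elements of $G$.

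For (a) I would use the Steinberg character in each family: for $\PSL(2,q)$ and $\Sz(q)$ it has degree $q$ respectively $q^{2}$, is rational valued, has trivial kernel by simplicity of $G$, and is realised over the prime field and hence over $\QQ$; for $\PSL(3,3)$ a faithful rational irreducible component is read off directly and is essentially the one used in Example~\ref{example_PSL_3_3}. Thus in every case $\QQ G$ admits a faithful Wedderburn component of the form $M_{n}(\QQ)$.

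For (b) I would collect the prime-order case of (ZC1) from the literature and from the computational tools of Section~2: $\PSL(2,p)$ is handled by Hertweck~\cite{HertweckBrauer}; $\PSL(2,2^{p})$ and $\PSL(2,3^{p})$ are reduced to odd prime orders by the Berman--Higman theorem (Proposition~\ref{prop:pA}) and then treated by the HeLP method applied to the generic character tables, in the spirit of 4.11; $\PSL(3,3)$ is settled in Example~\ref{example_PSL_3_3}; and for $\Sz(2^{p})$ the HeLP method applied to the generic character tables of the Suzuki groups reduces the problem to finitely many prime orders per group, each controlled via Proposition~\ref{MRSW_Prop}. The preceding lemma then forces $\Ker\pi \cap \V(\ZZ G)$ to be torsion free for the faithful block chosen in (a).

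With both ingredients in place, Proposition~\ref{mink} applied with any odd prime $p$ produces the desired torsion-free normal subgroup of finite index in $\V(\ZZ G)$. The main obstacle I expect will be (b) for the full infinite family $\Sz(2^{p})$, where one needs a uniform HeLP argument on the generic character table rather than a case-by-case check; the $\PSL$-families and $\PSL(3,3)$ are essentially covered by the literature cited in 4.11 and by Example~\ref{example_PSL_3_3}.
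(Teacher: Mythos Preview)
Your plan is workable in outline, but it takes a harder road than the paper and has a gap at $\PSL(3,3)$ in the way you phrase step~(b).

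The paper's argument for the two infinite families $\PSL(2,q)$ and $\Sz(q)$ does \emph{not} go through the lemma preceding Proposition~\ref{mink}; it never tries to establish rational conjugacy of prime-order units. Instead it exploits a single feature of the Steinberg character $\chi$: on every element of a fixed prime order $r$, $\chi$ takes one and the same value $t$. Since a torsion unit $u$ of order $r$ has $\varepsilon_1(u)=0$ and all non-vanishing partial augmentations supported on classes of order $r$ (Proposition~\ref{prop:pA}), one gets
\[
\chi(u)=\sum_j \varepsilon_{x_j}(u)\,\chi(x_j)=t\sum_j \varepsilon_{x_j}(u)=t\neq \chi(1),
\]
so $u\notin\Ker\pi$. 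This is strictly weaker than (ZC1) for primes, but it is exactly what is needed for Proposition~\ref{mink}, and it is uniform in $q$. No HeLP computation and no case analysis on the generic tables is required.

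Your route via (b) asks for rational conjugacy of all prime-order units, and this is where the difficulty concentrates. For $\PSL(3,3)$ you cite Example~\ref{example_PSL_3_3}, but that example says explicitly that rational conjugacy for units of order $3$ is \emph{not} decided by \texttt{HeLP}; what the example actually establishes is only that no order-$3$ unit lies in the kernel of a nontrivial irreducible representation. So the hypothesis of the preceding lemma is not available there, and you must fall back on the direct kernel argument anyway. For $\Sz(2^{p})$ you yourself flag the obstacle: a uniform (ZC1)-for-primes statement over the whole family is not provided by the references in Section~4, whereas the Steinberg-character observation disposes of it in one line. In short, your approach can be made to work piecewise, but the paper's constant-value trick for the Steinberg character is both simpler and uniform, and it avoids precisely the two cases you identify as problematic.
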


\begin{proof} In \cite{Thom}, Thompson proved that a minimal simple group is isomorphic to $\PSL(2, q)$, $\Sz(q)$ or $\PSL(3,3)$. For the two series generic character tables are known, cf.\ e.g.\ \cite[XI, \S 5]{Huppert3}. Let $G$ be one of these groups and let $\chi$ be the Steinberg character. The character $\chi$ attains the same value $t$ on all elements of order a fixed prime $r$. Let $x_1, ..., x_s$ be representatives of the conjugacy classes of $G$ of elements of order $r$ and let $u \in \V(\ZZ G)$ be a torsion unit of order $r$. Then \[ \chi(u) = \sum_{j=1}^s \varepsilon_{x_j}(u)\chi(x_j) = t \not= \chi(1). \] Hence $u$ is not in the kernel of a representation $D$ affording $\chi$. $D$ can be realized over the rationals, hence by Proposition \ref{mink} we obtain a torsion-free normal subgroup of $\V(\ZZ G)$ of finite index.

In case $G = \PSL(3, 3)$ Example \ref{example_PSL_3_3} can be used to find a suitable block. 
\end{proof}

\bibliographystyle{plain}
\bibliography{bib_dfg}

\end{document}